\newtheorem{defi}{Definition}[section]
\newtheorem{theo}[defi]{Theorem}
\newtheorem{lem}[defi]{Lemma}
\newtheorem{nota}[defi]{Notation}
\newtheorem{coro}[defi]{Corollary}
\newtheorem{rema}{Remark}
\newtheorem{exa}{Example}
\def\me{\mathsf{e}}
\def\mv{\mathsf{v}}
\def\mw{\mathsf{w}}
\numberwithin{equation}{section}
\title[Travelling waves for the BBM equation on networks]{Construction of exact travelling waves for the Benjamin-Bona-Mahony equation on networks}
\author{Delio Mugnolo}
\address{Delio Mugnolo, Institut f\"ur Analysis, Universit\"at Ulm, Helmholtzstra{\ss}e 18, 89081 Ulm, Germany}
\email{delio.mugnolo@uni-ulm.de}
\author{Jean-Fran\c cois Rault}
\address{Jean-Fran\c cois Rault, LMPA Joseph Liouville ULCO, FR CNRS Math. 2956,
Universit\'e Lille Nord de France,
50 rue F. Buisson, B.P. 699, F--62228 Calais Cedex, France}
\email{jfrault@lmpa.univ-littoral.fr}
\date{\today}
\thanks{This article has been initiated during a visit of the first author in Calais and continued during a visit of the second author in Ulm. The first author thanks  the University of Littoral C\^ote d'Opale for its hospitality and financial support. The visit of the second author was  financially supported by the Land Baden--W\"urttemberg in the framework of the \emph{Juniorprofessorenprogramm} -- research project on ``Symmetry methods in quantum graphs''.}
\begin{document}

\begin{abstract}
We are interested in the existence of travelling waves for the Benjamin-Bona-Mahony equation on a network. First we construct an explicit wave, defined in $\mathbb{R}$. Then, we use this wave to derive some conditions on the coefficients appearing in the equations and on the geometry of the network to ensure the existence of travelling waves on the network.
\end{abstract}

\maketitle

\section{Introduction}

Partial differential equations on \emph{networks} (also known as \emph{metric graphs} or \emph{quantum graphs}) have been studied for many years now. While they originate from the pioneering studies of quantum chemists Klaus Ruedenberg and Charles W.\ Scherr in~\cite{RueSch53}, the current state of the art of this theory owns much to the thorough mathematical analysis performed on these problems in the 1980s in particular by G\"unter Lumer and some of his students and collaborators. These and subsequent mathematical investigations have then paved the road to the re-discovery of this kind of models in the context of theoretical quantum physics in the last decade. Perhaps because of the quantum physical bias in the most recent investigations, little attention seems to have been devoted to the study of nonlinear phenomena in networks -- and anyway, mostly from a linear point of view. However, in the theory of nonlinear PDEs one is often interested in questions that are somewhat complementary to those considered in the linear case. For instance, one is often particularly keen on finding out whether a given system modeled by a nonlinear PDE can support \emph{any} travelling wave, for certain conveniently chosen initial data (which then turn out to be suitable inputs), rather than asking for mere existence and uniqueness of a (possibly non-physical) solution for \emph{any} initial data. In the present paper we are going to suggest a possible general approach to the study of travelling waves in networks. It turns out that, unlike on the case of PDEs defined in the whole space (but somewhat similarly to the case of PDEs on Riemannian manifolds~\cite{BerLabHam10}) existence of travelling waves imposes severe restrictions on the geometry of the graph.
The ideas presented here can be applied to a large class of nonlinear equations featuring a second order linear differential operator as the leading order term, including the nonlinear Schr\"odinger equation~\cite{AdaCacFin11}, the one-dimensional Navier-Stokes equation~\cite{PokBor04}, or the FitzHugh--Nagumo  or Rall equations~\cite{EvaKemMaj92,CarMug07}, as soon as a special solution of the PDE without boundary conditions is known. For example, we may as well borrow the travelling wave analysis from~\cite{dePVaz91} in order to discuss  the porous medium reaction-diffusion equation on networks.

\noindent In this note we prefer to fix the ideas and focus primarily on the Benjamin-Bona-Mahony equation
\begin{equation}\tag{BBM}
u_t - au_{xxt} + buu_x +du_x =0 \quad\textrm{ in } \mathbb{R}\textrm{ for } t>0,
\end{equation}
where $a\in(0,\infty)$, $b\in \mathbb{R}^*$ and $d\in \mathbb{R}$. This equation models long waves in  non-linear dispersive systems and is known as a good substitute for the Korteweg-de Vries equation
$$
u_t - u_{xxx} + uu_x =0 \quad\textrm{ in } \mathbb{R}\textrm{ for } t>0
$$
in the case of shallow waters in a channel, see \cite{Whit}. In \cite{BBM1}, Benjamin, Bona and Mahony studied the initial value problem corresponding to (BBM)  and they established global existence and uniqueness results. \\
In \cite{BC}, Bona and Cascaval considered a finite metric tree consisting of edges $\me_i$ and established the well-posedness of a system of BBM equations
\begin{equation}\label{BBM-BC}
u_t - a_iu_{xxt} + b_iuu_x +d_iu_x =0 \quad\textrm{ on each } \me_i \textrm{ for } t>0,
\end{equation}
 with standard continuity conditions and Kirchhoff conditions at vertices $\mv=\me_i \cap \me_j$ and with Dirichlet boundary conditions at endpoints. This equation is used to model the blood flow in the human cardiovascular system, see \cite{BC} and references therein. However, just thinking of the cardiovascular system seems to motivate the discussion of topologies that include circuits.   \\
We begin our note discussing a general way of deriving conditions on the coefficients of (BBM) as well as on the metric and orientation properties of the network (but not on the topological ones!) as a consequence of the standard boundary conditions that are customarily imposed on evolution equations on networks. In Section~\ref{prelim-bbm} we show that this already excludes certain network configurations. This analysis is not really restricted to any specific semilinear PDE of second order (in space).

\noindent
As soon as one focuses on the BBM equation, however, it seems natural to expect that the pressure profile follows a wave-type behavior. Thus, our interest in this note is to derive an explicit formula for the travelling waves solution of (BBM) extending the bifurcation method (Section \ref{Spf}) used by Song and Yang in the case of the Zakharov-Kutnetsov-BBM equations \cite{SY}. Then, in the case where the BBM equation is posed on a network, we use this formula to determine some conditions on the coefficients appearing in the equations and on the geometry of the network to ensure the existence of travelling waves on the network. More precisely, we construct a solitary wave $u$  such that $u(t,x) = \varphi(x-ct)$ on each edge with some $\varphi$ vanishing at $\pm \infty$. For the reader's convenience, we begin with the case where the network is a star (Section \ref{sstar}), then we deal with the case of a tree (Section \ref{stree}), and finally we treat the general case of a network that may possibly contain circuits (Section \ref{scircuits}).

\noindent It seems that not many other investigations on travelling waves or solitons on networks have been performed. In fact, we are only aware of~\cite[Sections 16--17]{BPN} and~\cite{Bel95} for the case of linear diffusion and a certain class of reaction-diffusion equations; and some recent progresses in the theory of linear and nonlinear Schr\"odinger equations on star graphs, see~\cite{AdaCacFin11} and references therein.

\section{Preliminaries on networks}\label{prelim}

For any graph $\Gamma =(V,E,\in)$, the vertex set is denoted by $V=V(\Gamma)$, the edge set by $E=E(\Gamma)$ and the incidence relation by $\in \subset V\times E$. 
%Set $$\g_{\min}=\min \{\g(\mv)\setprop \mv\in V\}.$$
All graphs considered in this paper are assumed to be non-empty, simple,  connected and at most countable.
The simplicity property means that $\Gamma$ contains no loops, and at most one edge can join two vertices in $\Gamma$. We give a numbering of the vertices $\mv_i$, $i\in\mathbf V\subset \mathbb{N}$ with ${\rm card}\; {\mathbf V}={\rm card}\;  V$,  and a numbering of the edges $\me_j$, $j\in \mathbf E\subset \mathbb{N}$ with ${\rm card}\; {\mathbf E} ={\rm card}\;  E$. We denote by

\[
N(\mv):=\{j\in \mathbb{N} \ / \ \mv\in \me_j\}.
\]
the of \emph{boundary index set} of $\mv$, i.e., the set of indices of those edges incident in $\mv$; while $d(\mv)$ denotes the \emph{degree} of $\mv$, i.e.,
\[
d(\mv):={\rm card}\; N(\mv)
\]

Also, we assume our graphs to be locally finite, i.e.,
\begin{equation*}\label{locf} 
\forall \ \mv\in V(\Gamma), \ d(\mv) <\infty.
\end{equation*}\\
Recall that a simple, connected graph is called a \emph{tree} if it does not contains any circuits, i.e., if any two vertices are connected by exactly one undirected path; it is called a star if it is a tree and all vertices but one  have degree 1.

We then consider \emph{networks} by associating with a given graph a topological structure  in $\mathbb{R}^m$, i.e., we regard $V(\Gamma)$ as a subset of $\mathbb{R}^m$ (in fact, it is well-known that each countable graph can be embedded in $\mathbb R^3$) and the edge set consists of a collection of Jordan curves
$$E(\Gamma)=\{ \pi_j:[0,\ell_j]\to \mathbb{R}^m \ / \ j\in {\mathbf E}  \}$$
with the following properties: each support
$\me_j:=\pi_j\left([0,\ell_j]\right)$ has its endpoints in the set
$V(\Gamma)$, any two vertices in $V(\Gamma)$ can be connected by a path
with arcs in $E(\Gamma)$, and any two edges $\me_j\not= \me_h$ satisfy
$\me_j\cap \me_h\subset V(\Gamma)$ and $\textrm{card}(\me_j\cap \me_h)\leq 1.$ The arc
length parameter of an edge $\me_j$ is denoted by $x_j$.
Unless otherwise stated, we identify the graph
$\Gamma =(V,E,\in)$ with its associated network
$$G=\bigcup_{j\in {\mathbf E} }\pi_j\left([0,\ell_j]\right),$$
especially each edge $\pi_j$ with its support $\me_j$. $G$ is called
a $\mathcal{C}^\nu$-network, if all $\pi_j\in\mathcal{C}^\nu ([0,\ell_j],\mathbb{R}^m)$.
We shall distinguish the {\it boundary vertices} 
\[
V_b=\{\mv_i\in V \ / \  d(\mv_i)=1\}
\] 
from the {\it ramification vertices}
\[
V_r=\{\mv_i\in V \ / \ d(\mv_i)\geq 2\}.
\]
The orientation the network is given by means of the incidence matrix $\mathcal I:=(\iota_{ij})$, where
\begin{equation}\label{incidence} 
\iota_{ij}=
\begin{cases}
1  & \text{if } \pi_j(\ell_j)=\mv_i,\\
-1  & \text{if } \pi_j(0)=\mv_i,\\
0  & \text{otherwise.} \\
\end{cases}
\end{equation}
The cases of $\iota_{ij}=1$ and $\iota_{ij}=-1$ correspond to the cases of an incoming and an outgoing edge, respectively.\\
The reason why we speak of \emph{networks}, at the risk of confusing a reader more familiar with graph theory, is that our setting is in fact a generalization of the graph theoretical theory of networks, where the edge lengths can be seen as their capacities. Accordingly, we also borrow further graph theoretical notions and speak of a \emph{sink} (resp., a \emph{source}) to describe a vertex with only incoming (resp., only outgoing) edges.

\noindent For a function $u:G \times \mathbb{R} \to \mathbb{R}^+$ we set
$u_j(\cdot,t):=u(\pi_j (\cdot),t):[0,\ell_j]\to\mathbb{R}$ for all $t \geq 0$ and use the abbreviations
$$u_j(t,\mv_i):=u_j(\pi_j^{-1}(\mv_i),t),\quad
\partial_j u_j(t,\mv_i):=\frac{\partial}{\partial x_j}u_j(t,x_j)\Bigr|_{\pi_j^{-1}(\mv_i)}
\quad\hbox{etc.}$$

\section{The BBM equation on networks}\label{prelim-bbm}

Our analysis is based on the classical observation that, by definition, existence of a travelling wave solution for a separable evolution equation
\begin{equation}
\label{generalsepa}
F(t,x,v)=0,
\end{equation}
(where $F$ may possibly depend on partial derivative of any order of $v$), i.e., existence of some function $\psi$ and some constant $c>0$ for which the solution of~\eqref{generalsepa} satisfies
\begin{equation}\label{traveq}
v(t,x)\equiv \psi(x-ct),
\end{equation}
is equivalent to solvability of the system
\begin{equation}
\label{generalsepa2}
\left\{
\begin{array}{rcl}
F(t,x,v)&=&0,\\
v_t&=&-cv_x,
\end{array}
\right.
\end{equation}
because by~\eqref{traveq} the chain rule applied to $\psi$ yield
\[
v_t(t,x)=-c\psi'(x-ct)=-cv_x(t,x),
\]
and conversely~\eqref{traveq} yields the only possible solutions of the second equation of~\eqref{generalsepa2}.
In particular, in the case of the (BBM), this \emph{Ansatz} leads to considering the system
\begin{equation}\label{bbmtransportsystm}
\left\{
\begin{array}{rcl}
u_t-au_{xxt}+buu_x+du_x&=&0\quad\textrm{ in } \mathbb{R}\textrm{ for } t>0,\\
u_t&=&-cu_x \quad\textrm{ in } \mathbb{R}\textrm{ for } t>0.
\end{array}
\right.
\end{equation}
Solving this system amounts to finding a function $w$ of one variable  along with some wave velocity $c\in \mathbb R$ such that
\begin{equation}\label{BBMf}
ac\varphi'''+b\varphi\varphi'+(d-c)\varphi'=0
\end{equation}
where $\varphi'$ denotes the derivative of $\varphi$, with $a>0$, $b\not=0$ and $d\in \mathbb R$. If such a $\varphi$ exists, then by definition $u$ will be obtained by
\[
u(t,x):=\varphi(x-ct),
\]
\emph{on each edge}. Still, existence of a solution of~\eqref{BBMf} is a necessary but not sufficient condition for the whole network to support a travelling wave. In order to construct a travelling wave solution one needs to transform boundary conditions for $u$, which we will introduce soon, into conditions for $\varphi$. We will do so applying an idea developed and thoroughly discussed in~\cite[Sections 16--17]{BPN}.

To fix the ideas, as the basic geometric transition condition at
ramification vertices we impose the continuity condition
\begin{equation}\label{cc} 
\forall \mv_i \in V_r,\ \forall t \in \mathbb{R}^+ :\, \, \me_j\cap \me_h=\{\mv_i\}\, \Longrightarrow \, u_j(t,\mv_i)=u_h(t,\mv_i),
\end{equation}
that clearly is contained in the condition $u\in \mathcal{C}(G\times \mathbb{R})$.
Moreover, at all vertices $\mv_i\in V_r$ we impose the classical Kirchhoff
condition
\begin{equation}\label{kc} 
\forall \mv_i \in V_r,\ \forall t \in \mathbb{R}^+ :\,\, \sum_{j\in {\mathbf E} } \iota_{ij} a_j\partial_j u_j (t,\mv_i)=0,
\end{equation}
where $a_j$ is the coefficient of the BBM~\eqref{BBM-BC} on the edge $\me_j$. This Kirchhoff conditions corresponds to imposing conservation of the flow -- and hence of the mass -- at each ramification vertex. Including the coefficients $a_i$ in this condition is necessary in order to make this conservation independent of the length of the edges (which in turn depend on their parametrization).
Note that Condition \eqref{kc} does not depend on the orientation.

\noindent
Summing up, in the present paper we consider a system of BBM equations on a $\mathcal{C}^2$-network $G$
\begin{equation}\tag{BBMG}
\left\{
\begin{array}{ll}
\partial_t u_i -a_i  \partial_i^2 \partial_t u_i +b_i u_i \partial_i u_i +d_i \partial_i u_i = 0,  \qquad & x\in \me_i,\ t> 0, \\
u_j(t,\mv_p) = u_k(t,\mv_p) &  j,\ k \in N(\mv_p),\ t \geq 0,\\
\displaystyle\sum_{j\in  E } \iota_{pj} a_j\partial_j u_j (t,\mv_p)=0 & t \geq 0,
\end{array}
\right.
\end{equation}
where $a_i>0$, $b_i \in \mathbb{R}^*$, $d_i \in \mathbb{R}$, for all $i \in {\mathbf E} $ and all $\mv_p \in V_r$. \\

\noindent Initial data are not prescribed, since such data would already fix the initial profile of the front wave. Also, we do not impose  boundary conditions  on the boundary vertices $\mv\in V_b$, since  such data describe the tail of the front wave.

\begin{rema}
Another approach would be to replace each edge $\me$ containing a vertex in $V_b$ by a half-line whose endpoint is $\me \cap V_r$. In this way we would consider a kind of nonlinear scattering problem.
\end{rema}

\begin{defi}
A \emph{strong solution} of the system ${\rm(BBMG)}$ is a function
$$u \in \Big\{\  u \in \mathcal{C}(G) \ \Big/ \ \forall \ i \in \mathbf{V}, \ u_i  \in \mathcal{C}^{1,1}( \me_i \times [0,\infty)) \quad\textrm{ and } \ \partial_t u_i  \in \mathcal{C}^{2,0}( \me_i \times  [0,\infty) )\Big\}$$
that satisfies ${\rm(BBMG)}$ pointwise.
\end{defi}
\noindent More specifically, in this paper we are looking for \emph{travelling wave solutions}.
\begin{defi}
A strong solution $u$ of ${\rm(BBMG)}$ is called a \emph{travelling wave} if there exists a velocity vector $(c_i)_{i\in \mathbf E}\subset \mathbb R_+$ and a vector-valued function $(\varphi_i)_{i\in \mathbf E} \subset \mathcal{C}^3(\mathbb{R})$ such that
\[
u_i(x_i,t)=\varphi_i(x_i-c_i t)\qquad\hbox{for all }x_i \in e_i \hbox{ and }t \geq 0.
\] 
\end{defi}

\begin{defi}\label{defi:wave}
A travelling wave $u$ defined by
\[
u_i(x_i,t):=\varphi_i(x_i-c_i t)\qquad\hbox{for all }x_i \in e_i \hbox{ and }t\geq 0,
\]
is said to be \emph{stationary} if on  each edge $e_i$ either  $\varphi_i' \equiv 0$ or $ c_i = 0$. We call $\varphi$ \emph{solitary}  if it admits at most one local extremum and if $\displaystyle \lim_{z \to \pm \infty} \varphi(z) $ exists in $\mathbb{R}$.
\end{defi}

 Observe that the development of a travelling wave in a network on each of whose vertices a boundary condition is imposed requires the existence of paths of infinite length, possibly allowing repetition of edges but not doubling back. Hence, it is apparent that only infinite graphs and/or graphs with circuits can support a travelling wave -- unless we drop any condition on the function at the boundary vertices. It turns out that in fact additional compatibility conditions are necessary.
 
Such conditions can be derived by the standard boundary conditions (continuity conditions \eqref{cc} and Kirchhoff conditions \eqref{kc}) which we impose at the ramification vertices. This idea has been exploited already in~\cite{BPN}. Indeed, if we already assume the solution to be a travelling wave, then the continuity condition~\eqref{cc} is equivalent to
\begin{equation}
\label{eq:cont-eq}
\forall \mv_i \in V_r\, \forall t \in \mathbb{R} ^+:\, \,\me_j \cap \me_k = \{ \mv_i\} \ \Rightarrow \ \varphi_j(\varepsilon_{ij} - c_jt) = \varphi_k(\varepsilon_{ik} - c_kt),
\end{equation}
while derivating both members of~\eqref{cc} with respect to time we see that
\begin{equation}
\label{eq:kirchh-eq}
\forall \mv_i \in V_r\, \forall t \in \mathbb{R}^+ :\, \,\me_j \cap \me_k = \{ \mv_i\} \ \Rightarrow \ c_j \varphi_j'(\varepsilon_{ij} - c_jt) =c_k \varphi_k'(\varepsilon_{ik} - c_kt) \ ,
\end{equation}
with 
\[
\varepsilon_{ij}: =\frac{ \ell_j (1+\iota_{ij})}{2}=
\begin{cases}
\ell_j  & \text{if } \pi_j(\ell_j)=\mv_i,\\
0 & \text{if } \pi_j(0)=\mv_i,
\end{cases}
\qquad \hbox{for all }i,j\hbox{ s.t. }\iota_{ij}\not=0.
\]
\begin{rema}
Using the variable $z=x-c_jt$ we see that~\eqref{eq:cont-eq} can be equivalently re-written as
\begin{equation}\label{prof1}
\forall \mv_i \in V_r\, :\, \,\me_j \cap \me_k = \{ \mv_i\} \ \Rightarrow\varphi_j(z) = \varphi_k\Big(\varepsilon_{ik}-\frac{c_k}{c_j}\varepsilon_{ij}+\frac{c_k}{c_j}z \Big).
\end{equation}
Hence, the continuity at ramification vertices implies that for any fixed pair of mutually adjacent edges $\me_j,\me_k$, each $\varphi_j$ is of the form
\begin{equation}\label{profuni}
\varphi_j(z) = \varphi_k\Big(C(k,j)+\frac{c_k}{c_j}z \Big),
\end{equation}
for a constant $C(k,j)$. But then, owing to connectedness of the graph,~\eqref{profuni} can be extended to any pair of edges $\me_j,\me_k$, 
where $C(k,j)$ is a constant depending on the edge lengths and speeds along a suitable path containing $\me_j$ and $\me_k$. This shows that a travelling wave is completely determined by its profile on one single edge $\me_k$ and by the speeds $c_1, \ c_2,\ldots$
Using \eqref{prof1}, we finally  observe that
$$
\me_j \cap \me_k = \{ \mv_i\} \ \Rightarrow \  \partial_j u_j(t,\mv_i) = \frac{c_k}{c_j} \varphi_k'(\varepsilon_{ik} - c_k t),\qquad t \geq 0.
$$
Combining this relation with~\eqref{eq:kirchh-eq} we see that a travelling wave $u$ satisfies the Kirchhoff condition \eqref{kc} at a ramification vertex $\mv_i$ if and only if
\begin{equation}\label{Kirc}
\sum_{j\in {\mathbf E} } \iota_{ij}  \frac{a_j}{c_j}=0.
\end{equation}
We remark that, unlike~\eqref{eq:kirchh-eq} and~\eqref{eq:cont-eq}, the compatibility condition~\eqref{Kirc} does not impose any restriction on the geometry of the network, but only on the coefficients appearing in (BBM) on adjacent edges. We also remark that the system of ODEs consisting of the equation~\eqref{BBMf} on each edge is underdetermined, since the leading term $\varphi'''$  is of third order but we are only imposing conditions~\eqref{eq:cont-eq} and~\eqref{eq:kirchh-eq} on $\varphi$ and $\varphi'$.
\end{rema}

\begin{lem}
If a travelling wave solution of ${\rm(BBMG)}$ is stationary, then it is constant.
\end{lem}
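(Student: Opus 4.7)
The plan is to analyse each edge separately: from the defining alternative in Definition~\ref{defi:wave} I aim to conclude that $\varphi_i$ must be constant on every edge $\me_i$, and then to glue these constants into a single value using the continuity condition at ramification vertices.

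On an edge where $\varphi_i'\equiv 0$ there is nothing to do. On the other kind of edge, $c_i=0$ and $u_i(x_i,t)=\varphi_i(x_i)$ is independent of time. Substituting this ansatz into the BBM equation makes every time derivative vanish and leaves
\[
\varphi_i'(x_i)\bigl(b_i\varphi_i(x_i)+d_i\bigr)=0\qquad\hbox{for all }x_i\in\me_i.
\]
The set $S:=\{x_i\in\me_i:\varphi_i'(x_i)\neq 0\}$ is open because $\varphi_i\in\mathcal{C}^3(\mathbb{R})$. On $S$ one would have $\varphi_i\equiv -d_i/b_i$ (using $b_i\neq 0$), so $\varphi_i$ would be locally constant on $S$, forcing $\varphi_i'=0$ on $S$ and contradicting the very definition of $S$. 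Hence $S=\emptyset$, and $\varphi_i'\equiv 0$ in this case as well.

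Combining the two cases, on every edge $\me_i$ the profile $\varphi_i$ is a constant $k_i\in\mathbb{R}$. To conclude, for any two edges $\me_j,\me_k$ meeting at a ramification vertex $\mv_p$, the continuity condition in $\mathrm{(BBMG)}$ gives $k_j=k_k$. Since $\Gamma$ is connected and any two edges can be joined by a path whose consecutive edges meet at a common vertex of degree $\geq 2$ (that is, a ramification vertex), iterating this equality yields a common value $k$; therefore $u\equiv k$. The only subtle point is the passage from the pointwise identity $\varphi_i'(b_i\varphi_i+d_i)=0$ to $\varphi_i'\equiv 0$, which genuinely relies on the continuity of $\varphi_i'$; notably, neither the Kirchhoff compatibility condition~\eqref{Kirc} nor any geometric hypothesis on $\Gamma$ is needed.
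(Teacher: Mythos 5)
Your proof is correct and follows essentially the same route as the paper: an edge-by-edge case analysis based on Definition~\ref{defi:wave}, followed by gluing the edgewise constants through the continuity condition \eqref{cc} and connectedness of $G$. You are in fact somewhat more thorough than the paper's two-line argument, which only asserts constancy in time with a nod to \eqref{BBMf}; your open-set argument from $\varphi_i'\bigl(b_i\varphi_i+d_i\bigr)=0$ explicitly rules out a nonconstant time-independent profile on an edge with $c_i=0$, which is exactly what is needed to conclude constancy in space as well.
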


\begin{proof} By Definition~\ref{defi:wave}, the travelling wave $u$ is stationary if and only if on each edge $\me_j$ either $\varphi_j$ is a constant function or $c_j$ vanishes. In either case, $\varphi_j$ will be constant in time (see~\eqref{BBMf}) on each edge $\me_j$. In view of the continuity conditions \eqref{cc} and because $G$ is connected by assumption, the solution $u$ will be constant in time, as well. 
\end{proof}
Thus, we focus on the case where the wave $u$ really travels, i.e.  $u$ is non-constant and all the $c_i$ are strictly positive. 

\begin{lem}
If a non-constant travelling wave solution of ${\rm(BBMG)}$ exists, then no ramification vertex can be either a sink or a source.
\end{lem}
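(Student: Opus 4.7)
The plan is to argue directly from the compatibility condition~\eqref{Kirc} that was derived in the remark preceding the previous lemma. Recall that the Kirchhoff condition~\eqref{kc}, when evaluated on a travelling wave, is equivalent to
\[
\sum_{j\in \mathbf{E}} \iota_{ij}\frac{a_j}{c_j}=0 \qquad \hbox{for every }\mv_i\in V_r,
\]
and that by hypothesis every $a_j$ is strictly positive, while the assumption that $u$ is non-constant together with the statement immediately preceding the lemma guarantees that every $c_j$ is strictly positive as well.

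I would then argue by contradiction. Suppose some ramification vertex $\mv_i$ is a sink; by definition this means that $\iota_{ij}=1$ for every $j\in N(\mv_i)$, and $\iota_{ij}=0$ for every $j\notin N(\mv_i)$. Substituting into~\eqref{Kirc} collapses the sum to
\[
\sum_{j\in N(\mv_i)}\frac{a_j}{c_j}=0,
\]
which is impossible since every summand is strictly positive and $N(\mv_i)$ is non-empty (as $\mv_i\in V_r$ has degree at least $2$). The case of a source is handled in exactly the same way: all incidences being $-1$ yields $-\sum_{j\in N(\mv_i)} a_j/c_j = 0$, again contradicting positivity of the summands.

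There is no real obstacle in this argument: the content is entirely encoded in the compatibility relation~\eqref{Kirc} and in the sign hypotheses on $a_j$ and $c_j$. The only point worth stating explicitly at the beginning of the proof is why all $c_j$ may be assumed strictly positive, so as to justify dividing by them in~\eqref{Kirc}; this is precisely the reduction to the non-stationary case carried out in the previous lemma and the paragraph that follows it.
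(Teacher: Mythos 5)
Your proof is correct and follows essentially the same route as the paper: both arguments rest on the compatibility condition~\eqref{Kirc} together with the positivity of the $a_j$ and of the speeds $c_j$ (justified, as you note, by the reduction to the non-stationary case), so that a sink or a source would force the sum $\sum_j \iota_{ij}a_j/c_j$ to be strictly positive or strictly negative, a contradiction. No changes are needed.
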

\begin{proof}
Since we restrict ourselves to the case $a_j>0$ and $c_j>0$, if a non-constant travelling wave exists in $G$, then it follows from~\eqref{Kirc} that neither 
\[\displaystyle \sum_{j\in {\mathbf E} } \iota_{ij} \frac{a_j}{c_j} >0
\]
(as it is the case, in particular, for a sink -- i.e., if $\iota_{ij}>0$ for all $j \in{\mathbf E}$) nor 
\[
\displaystyle \sum_{j\in {\mathbf E} } \iota_{ij} \frac{a_j}{c_j}<0
\] (as, in particular, for a source -- i.e., if $\iota_{ij}<0$ for all $j \in{\mathbf E}$ ) can occur at any vertex $\mv_i \in V_r$.
\end{proof}

We have actually proved slightly more: If in fact $a_j=c_j$ for all $j\in \mathbf E$, then the above lemma states that each vertex has to be \emph{balanced}, in the sense that each vertex has to have the same number of outgoing and incoming edges. This condition also appears in the theory of first order differential operators on network~\cite{Exn12}, and is known to be necessary for the existence of travelling wave solutions for a nonlinear Schr\"odinger equation on a star graph~\cite{AdaCacFin11}. Of course, in classical graph theory it is well-known that a directed graph is Eulerian if and only if it is balanced.

\begin{rema}
Just as continuity and Kirchhoff-type conditions are natural for a manifold of partial differential equations on networks, and not only for the $\rm(BBMG)$ system, the compatibility conditions~\eqref{eq:cont-eq} and~\eqref{Kirc} are the natural one for all problems on networks involving differential operators in divergence form. Results similar to those of this note could then be deduced for different classes of evolution equations.

In fact, even if we have chosen to concentrate on the BBM equation, the above analysis shows how our strategy can be applied to more general problems, as those mentioned in the introduction. Though, other favorite differential models for waveguides -- like the KdV equation, the Camassa--Holm or the Whitham equation -- do not seem to be treatable by our methods, as it is not quite clear which boundary conditions should be imposed on a third order differential equations or on integral equations~\cite{CamHol93,AdlGurGur99,EhrKal09}. 
\end{rema}

\section{Profile of the front}\label{Spf}

We want to determine an explicit function $\varphi:\mathbb{R}\to \mathbb R$ such that $u$ defined by
\[
u(t,x): = \varphi(x-ct),\qquad (t,x)\in \mathbb R \times \mathbb R^+,
\]
for some $c>0$, is a solution of (BBM). Integrating~\eqref{BBMf} over $x$, we are lead to
\begin{equation*}
-c\varphi + ac\varphi'' + \frac{b}{2}\varphi^2 +d\varphi =A,
\end{equation*}
for some $A\in \mathbb{R}$. With the notation $\psi:=\varphi'$, we obtain the first order differential system
\begin{equation}\label{BBMs}
\left\{ 
\begin{array}{lll}
\varphi' &=& \psi, \\
\psi'&=& \frac{A +(c-d)\varphi - b\varphi^2/2}{ac}.
\end{array}
 \right. 
\end{equation}
As in \cite{SY}, we study the phase portraits of this system. First, we consider the functional
\begin{equation}\label{Func}
H(\varphi, \psi) = \frac{\psi^2}{2} - \frac{1}{ac} \Bigg( A\varphi +\frac{c-d}{2}\varphi^2 -\frac{b}{6}\varphi^3  \Bigg).
\end{equation}
A direct calculation shows that $H$ is constant along any trajectory of \eqref{BBMs}, i.e. for all  solutions $y\mapsto (\varphi(y), \psi(y) )$ of system~\eqref{BBMs} we have
$$
\frac{d}{dy} H(\varphi(y), \psi(y) ) = 0.
$$
Then, we need to investigate the stationary points of \eqref{BBMs}. 

 \begin{itemize}
\item If 
\[
(c-d)^2 +2Ab \leq 0,
\]
then \eqref{BBMs} has at most one stationary point and all its trajectories are unbounded, see Figure \ref{pp}.
\end{itemize}

\begin{figure}[h]
\begin{picture}(340,375)
\put(0,10){\includegraphics[width=1\textwidth]{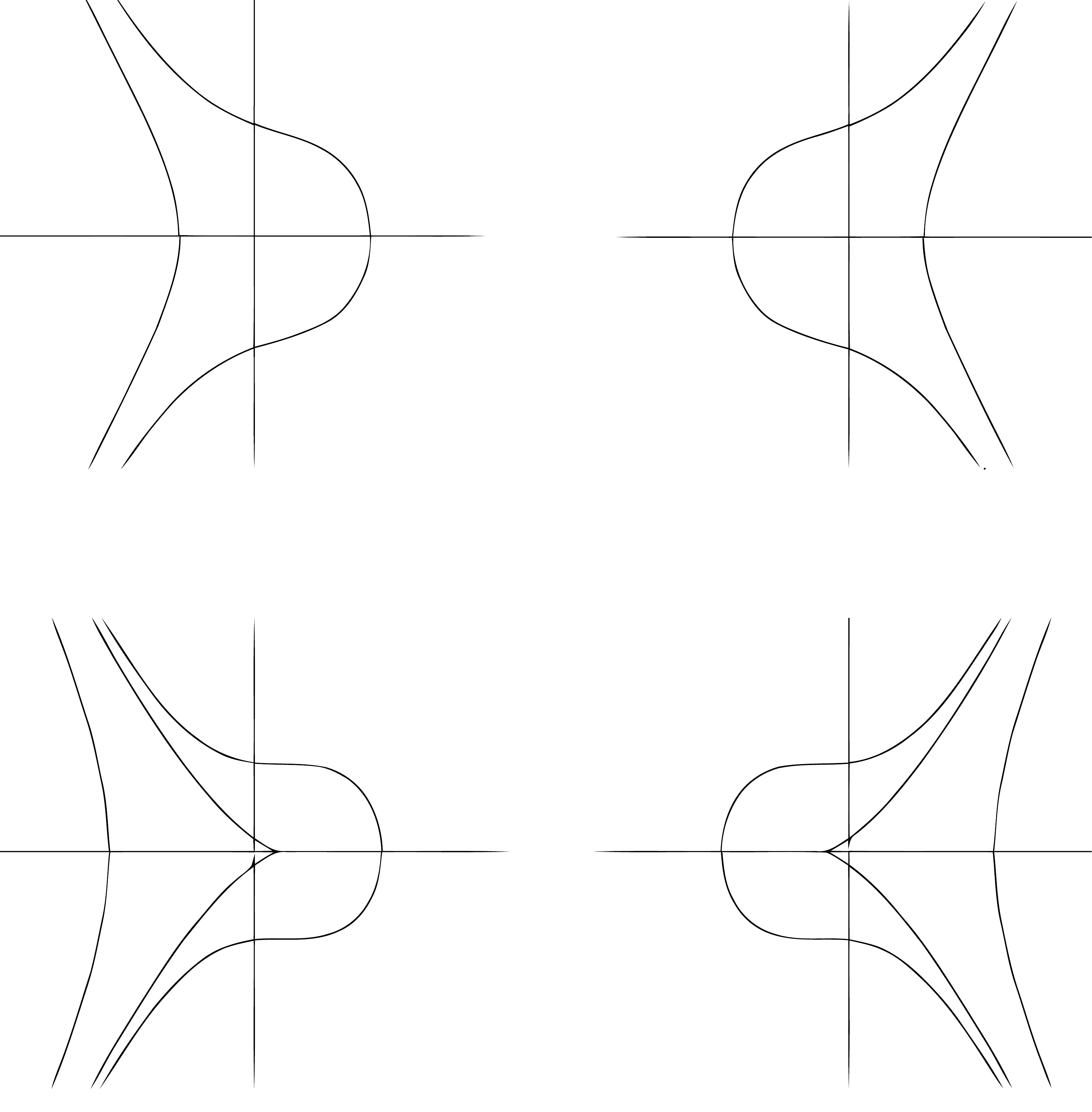}}
\put(15,195){$b>0$ and $(c-d)^2+2Ab<0$}\put(205,195){$b<0$ and $(c-d)^2+2Ab<0$}
\put(15,0){$b>0$ and $(c-d)^2+2Ab=0$}\put(205,0){$b<0$ and $(c-d)^2+2Ab=0$}
\end{picture}
\caption{Phase portraits of \eqref{BBMs} with $(c-d)^2 +2Ab\leq 0 $.}\label{pp}
\end{figure}

Hence, we focus on the case $(c-d)^2 +2Ab > 0$.

\begin{itemize}
 \item If 
 \[ (c-d)^2 +2Ab>0\]
  then \eqref{BBMs} admits two stationary points
$$
p_1 = \Big(\frac{c-d-\sqrt{(c-d)^2 +2Ab}}{b},0 \Big),
$$ 
and 
$$
p_2 =\Big( \frac{c-d+\sqrt{(c-d)^2 +2Ab}}{b},0 \Big).
$$
Clearly, the eigenvalues $\lambda$ of the linearized system of \eqref{BBMs} around $p_1$ satisfy
$$
\lambda^2 = \frac{\sqrt{(c-d)^2 +2Ab}}{ac}
$$
and the eigenvalues $\mu$ of the linearized system of \eqref{BBMs} around $p_1$ satisfy
$$
\mu^2 = -\frac{\sqrt{(c-d)^2 +2Ab}}{ac}.
$$
Then according to the theory of dynamical systems (e.g.  \cite{Am}, \cite{Chow} and \cite{Guck}), we obtain that $p_1$ is a saddle point for \eqref{BBMs}, whereas $p_2$ is a center.\\

\begin{figure}[h]
\begin{picture}(340,134)
\put(0,0){\includegraphics[width=1\textwidth]{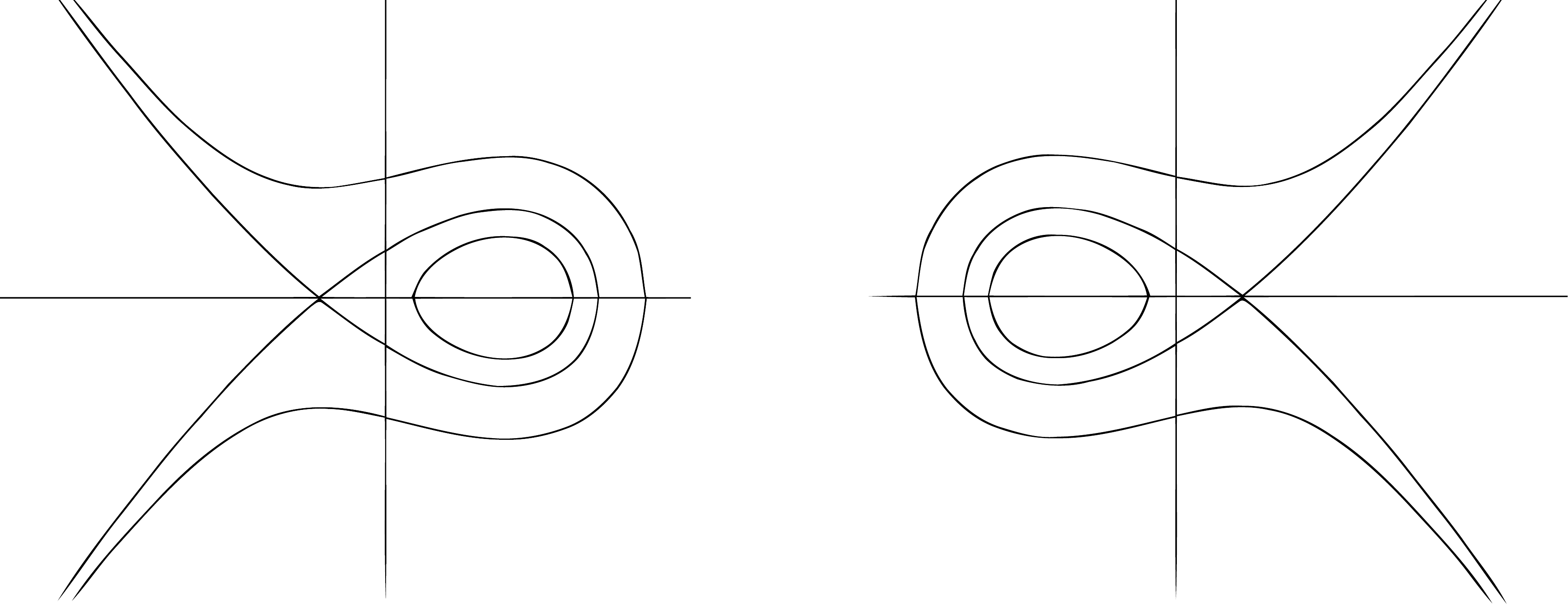}}
\put(10,20){$\Sigma_2$}\put(20,100){$\Sigma_1$}
\put(340,20){$\Sigma_2$}\put(325,100){$\Sigma_1$}
\put(78,83){$\Gamma$}\put(280,83){$\Gamma$}
\put(79,96){$>$}\put(260,98){$>$}
\put(112,83){$>$}\put(239,83){$>$}
\put(113,49){$<$}\put(239,49){$<$}
\put(37,103){\Large{$ \lrcorner$}}\put(37,35){\Large{$ \llcorner$}}\put(297,93){\Large{$ \urcorner$}}\put(297,36){\Large{$ \ulcorner$}}
\put(69,61){$p_1$}\put(282,61){$p_1$} \put(71,69){$\bullet$}\put(284,69){$\bullet$}
\put(110,62){$p_2$}\put(240,62){$p_2$} \put(114,69){$\bullet$}\put(239,69){$\bullet$}
\end{picture}
\caption{Phase portraits of \eqref{BBMs} with $(c-d)^2 +2Ab >0 $.}\label{pp2}
\end{figure}

\item Again in the case 
\[(c-d)^2 +2Ab>0,\]
we are specially interested in the homoclinic orbit $\Gamma$.  The heteroclinic ones also represent  travelling waves, but we can not derive their explicit formula (see Remark \ref{perOrb}), and some of them are singular at $\pm \infty$ (e.g. the branches $\Sigma_1$ and $\Sigma_2$.). Indeed, a homoclinic orbit corresponds to a solution $(\varphi, \psi)$ of the \eqref{BBMs} defined on $\mathbb{R}$ and satisfying
\begin{equation}\label{holim}
\lim_{y \to \pm \infty} \varphi(y) = \frac{c-d-\sqrt{(c-d)^2 +2Ab}}{b} \ \quad\textrm{ and } \ \lim_{y \to \pm \infty} \psi(y) = 0.
\end{equation}
In the special case $A=0$,~\eqref{holim} reduces to
\begin{equation}\label{holim0}
\lim_{y \to \pm \infty} \varphi(y) =  \lim_{y \to \pm \infty} \psi(y) = 0,
\end{equation}
and recalling that $H$ is constant along any trajectory of  \eqref{BBMs}, we obtain $H(\varphi, \psi)=0$ along $\Gamma$, and in the $\varphi-\psi$ plane, using \eqref{Func}, $\Gamma$ can be described as 
\begin{equation}\label{ppdes}
\psi^2 =  \frac{c-d}{ac}\varphi^2 -\frac{b}{3ac}\varphi^3 .
\end{equation}
Up to a translation, we can suppose that $(\varphi(0), \psi(0)) =(\varphi_0,0)$. Then, from standard regularity results (see~\cite{Am}) the abscissa $\varphi$ of trajectory $\Gamma$ is a solution of~\eqref{BBMf} belonging to $\mathcal{C}^\infty(\mathbb{R})$. When $b>0$, $\varphi $ is positive in $\mathbb{R}$, increasing in $(-\infty,0)$ because $\psi(t) = \varphi'(t)$ lies in upper half-plane for $t<0$, and decreasing in $(0,\infty)$ because $\psi$ lies in lower half-plane when $t>0$. When $b<0$, $\varphi$ is negative in $\mathbb{R}$, decreasing in $(-\infty,0)$ and increasing in $(0,\infty)$. In both cases, it is easy to see that $\varphi$ is a solitary wave of~\eqref{BBMf}, see Figure \ref{wave}.
 \end{itemize} 

\begin{figure}[h]
\begin{center}
\includegraphics[width=0.8\textwidth]{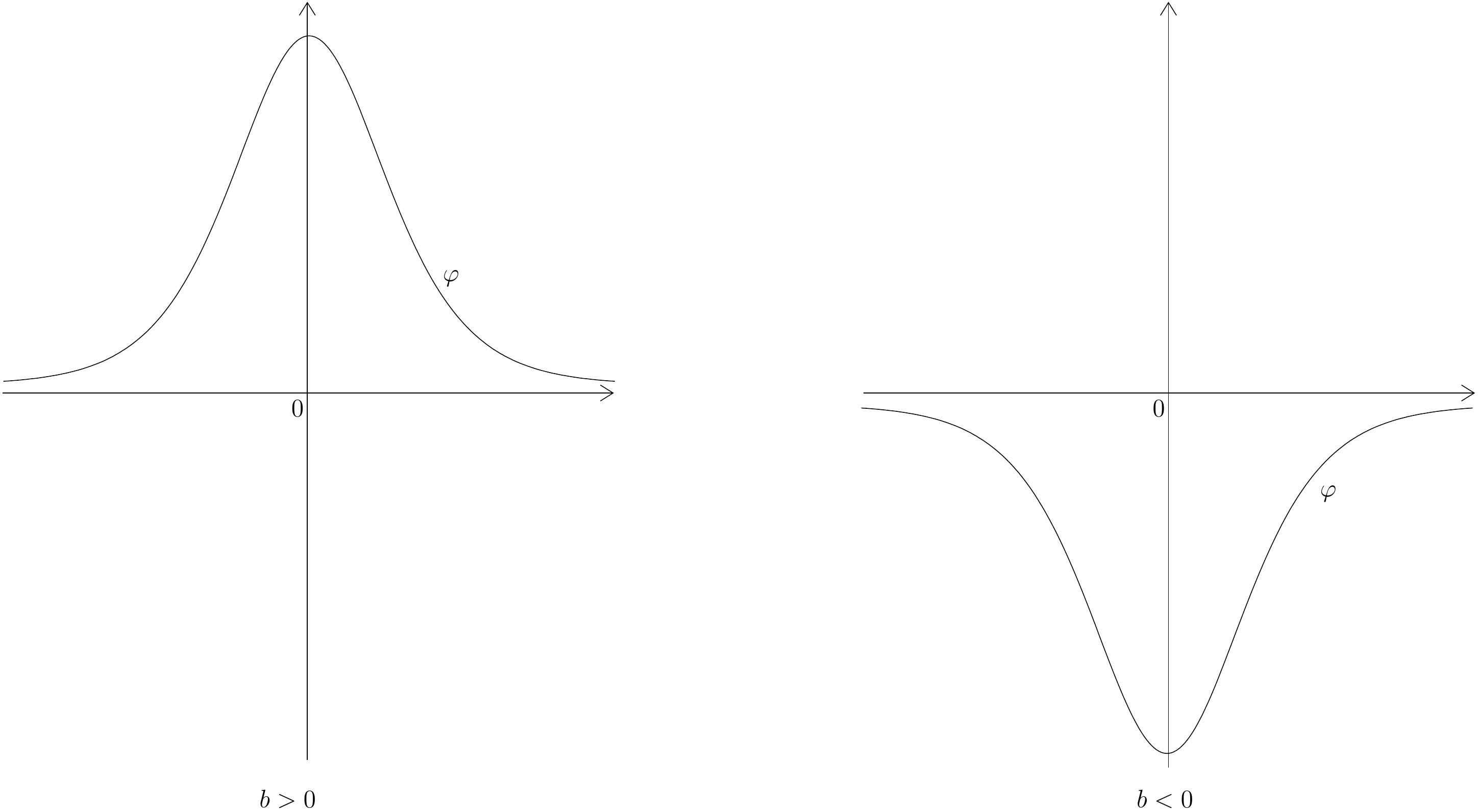}
\caption{Solitary wave of~\eqref{BBMf}.}\label{wave}
\end{center}
\end{figure}

\noindent From~\eqref{ppdes} and writing $\psi = \varphi'= \frac{d \varphi}{d y}$, we obtain
\begin{equation*}
\pm \sqrt{ \frac{1}{ \frac{c-d}{ac}\varphi^2 -\frac{b}{3ac}\varphi^3 }} d \varphi = d y,
\end{equation*}
and integrating between $0$ and $y$, we have
\begin{equation*}
\mathrm{sgn}(b) \sqrt{\frac{ac}{c-d}}\int_{\varphi(y)}^{\varphi_0}\frac{1}{\sqrt{ s^2 -\frac{b}{3(c-d)}s^3 }} d s = \int_y^0 d y = -y,\qquad \quad\textrm{ for } y<0,
\end{equation*}
and 
\begin{equation*}
-\mathrm{sgn}(b) \sqrt{\frac{ac}{c-d}}\int_{\varphi_0}^{\varphi(y)}\frac{1}{\sqrt{s^2 -\frac{b}{3(c-d)}s^3 }} d s = \int_0^y d y = y,\qquad \quad\textrm{ for } y>0.
\end{equation*}
In order to compute explicit formulas, we need to impose
\begin{equation}\label{c>d}
c>d.
\end{equation}
Using both anti-derivatives $x\mapsto \textrm{arccosh} \Big( \frac{\alpha}{x} -1 \Big)$ and $x\mapsto  \textrm{arccosh} \Big( \frac{\alpha}{x} +1 \Big)$ with $\alpha = \pm\frac{6(c-d)}{b}$, and up to a translation, we obtain:

\begin{theo}\label{tsw}
Let $A=0$. When $a>0$, $b\not=0$, $d\in \mathbb{R}$ and $c>\max\{d,0\}$,~\eqref{BBMf} admits the solitary waves
\begin{equation}\label{smsw}
\varphi(y) = \frac{6(c-d)}{b} \cdot \frac{1}{1+ \cosh \Big( \sqrt{\frac{c-d}{ac}} y \Big)}\qquad \quad\textrm{ for } y \in \mathbb{R}
\end{equation}
and
\begin{equation}\label{sisw}
\varphi(y) = \frac{6(c-d)}{b} \cdot \frac{1}{1- \cosh \Big( \sqrt{\frac{c-d}{ac}} y \Big)},\qquad \quad\textrm{ for } y \in \mathbb{R^*}.
\end{equation}
\end{theo}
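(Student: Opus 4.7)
The plan is to complete the integration already set up in the preceding paragraphs. All of the hard qualitative work has been done: conservation of the Hamiltonian $H$ along trajectories of~\eqref{BBMs}, specialized to the homoclinic orbit $\Gamma$ in the case $A=0$, yields the algebraic relation~\eqref{ppdes}
\[
\psi^2 = \frac{c-d}{ac}\varphi^2 - \frac{b}{3ac}\varphi^3 .
\]
Any $\mathcal{C}^3$ function $\varphi$ satisfying $(\varphi')^2$ equal to this cubic, with $\varphi$ not identically zero, will after differentiation produce a solution of the once-integrated equation $-c\varphi + ac\varphi'' + \tfrac{b}{2}\varphi^2 + d\varphi = 0$ and hence, by a further differentiation, of~\eqref{BBMf}. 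Thus the task reduces to solving the separated first order equation explicitly.

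First I would factor the cubic as $\varphi^2\bigl(\tfrac{c-d}{ac} - \tfrac{b}{3ac}\varphi\bigr)$ and note that the hypothesis $c>\max\{d,0\}$ guarantees $(c-d)/(ac)>0$, so the expression under the square root is non‑negative on the interval between the double root $\varphi=0$ and the simple root $\varphi_0 := 6(c-d)/b$. This is precisely the range of the homoclinic loop identified in the phase portrait. After a translation fixing $(\varphi(0),\psi(0))=(\varphi_0,0)$, separation of variables gives
\[
\pm\sqrt{\frac{ac}{c-d}}\,\frac{d\varphi}{|\varphi|\sqrt{1 - \tfrac{b}{3(c-d)}\varphi}} = dy ,
\]
the sign being determined by the half‑plane in which $\psi=\varphi'$ lies, as discussed when the monotonicity of $\varphi$ was described for $b>0$ and $b<0$.

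Next I would evaluate the integral. Setting $\alpha = 6(c-d)/b$ and substituting $w = \alpha/\varphi - 1$ (respectively $w=\alpha/\varphi+1$) transforms the integrand into $\pm dw/\sqrt{w^2-1}$, whose antiderivative is $\mathrm{arccosh}\,w$. Integrating between $0$ and $y$ and using $\varphi(0)=\varphi_0$, so that the arccosh argument vanishes at $y=0$ in the first case, yields
\[
\mathrm{arccosh}\!\left(\frac{\alpha}{\varphi(y)}-1\right) = \sqrt{\frac{c-d}{ac}}\,|y| ,
\]
and an analogous identity with $+1$ in the second case. Inverting, using that $\cosh$ is even, and returning to $\varphi$ gives the two formulas~\eqref{smsw} and~\eqref{sisw}.

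Finally I would check the regularity claims: in~\eqref{smsw} the denominator $1+\cosh(\cdot)$ never vanishes, so $\varphi\in\mathcal{C}^\infty(\mathbb R)$, and the explicit expression decays to $0$ as $y\to\pm\infty$ (solitary profile); in~\eqref{sisw} the denominator vanishes only at $y=0$, so $\varphi\in\mathcal{C}^\infty(\mathbb R^*)$ with a pole at the origin, consistent with the singular branches $\Sigma_1,\Sigma_2$ alluded to earlier. A direct substitution into~\eqref{BBMf} then provides the verification. The only delicate point, and the step where I would be most careful, is tracking the sign choices through the substitution when $b<0$ (where $\alpha<0$ and $\varphi<0$) so that the same closed formula is recovered without introducing spurious absolute values; this is essentially a bookkeeping exercise rather than a conceptual obstacle.
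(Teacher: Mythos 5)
Your argument is precisely the paper's own: conservation of $H$ along the homoclinic orbit gives~\eqref{ppdes} when $A=0$, and separation of variables combined with the antiderivatives $x\mapsto \mathrm{arccosh}\bigl(\tfrac{\alpha}{x}\mp 1\bigr)$, $\alpha=6(c-d)/b$, yields~\eqref{smsw} and~\eqref{sisw} exactly as in Section~\ref{Spf}. One bookkeeping slip worth fixing: the simple root of the cubic $\tfrac{c-d}{ac}\varphi^2-\tfrac{b}{3ac}\varphi^3$, i.e.\ the turning point $\varphi_0$ of the homoclinic loop, is $3(c-d)/b$ rather than $6(c-d)/b$, so at $y=0$ the arccosh \emph{argument} equals $1$ (and hence the arccosh itself vanishes) rather than the argument vanishing; with this correction your integration, inversion and the final regularity checks go through unchanged and reproduce the paper's proof.
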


\begin{rema}
The smooth solitary wave $\varphi$ describes the homoclinic orbit $\Gamma$, and the singular solitary wave $\varphi$ describes the branches $\Sigma_1$ and $\Sigma_2$.
\end{rema}

\noindent If we do not impose $A=0$ in~\eqref{holim0}, we obtain a solitary $\varphi_A$ wave solution of~\eqref{BBMf} satisfying \eqref{holim}. Let $\varphi_0 =  \varphi_A - A$. From, \eqref{BBMf}, we obtain that
\begin{equation}\label{trans}
-c\varphi_0' + ac\varphi_0''' + b\varphi_0 \varphi_0' +(d+A)\varphi_0' =0,
\end{equation}
and $\varphi_0$ verifies \eqref{holim0}. Hence, from Theorem \ref{tsw}, we have:

\begin{coro}
Let $A\in \mathbb R$. If $a>0$, $b\not=0$, $d\in \mathbb{R}$ and $c>\max\{d+A,0\}$ satisfy $(c-d-A)^2 + 4Ab>0$,~\eqref{BBMf} admits the the solitary wave solutions
\begin{equation*}
\varphi_A(y) = A+ \frac{6(c-d+A)}{b} \cdot \frac{1}{1+ \cosh \Big( \sqrt{\frac{c-d+A}{ac}} y \Big)},\qquad \quad\textrm{ for } y \in \mathbb{R}
\end{equation*}
and
\begin{equation*}
\varphi_A(y) =A+ \frac{6(c-d+A)}{b} \cdot \frac{1}{1- \cosh \Big( \sqrt{\frac{c-d+A}{ac}} y \Big)},\qquad \quad\textrm{ for } y \in \mathbb{R^*}.
\end{equation*}
\end{coro}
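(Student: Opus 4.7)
The plan is to reduce the general case of an arbitrary integration constant $A$ to the already-settled case $A=0$ of Theorem~\ref{tsw} by means of a vertical translation of the wave profile. This is precisely the idea suggested by the paragraph preceding the corollary: the constant $A$ in~\eqref{BBMs} plays the role of a shift, and the nonlinearity $b\varphi\varphi'$ turns a shift of $\varphi$ into a modification of the coefficient $d$.

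First, I would set $\varphi_0 := \varphi_A - A$ and substitute into~\eqref{BBMf}. Since $A$ is a constant, its derivatives vanish, so the only new contribution comes from the nonlinear term $b\varphi_A\varphi_A'$, which produces an additional linear piece $Ab\,\varphi_0'$. Collecting the $\varphi_0'$ terms one finds that $\varphi_0$ solves an equation of exactly the same form as~\eqref{BBMf},
\begin{equation*}
ac\varphi_0''' + b\varphi_0\varphi_0' + (d' - c)\varphi_0' = 0,
\end{equation*}
with the single scalar parameter $d$ replaced by an effective constant $d'$ (this is the content of equation~\eqref{trans}). The key observation is that $\varphi_0$ now satisfies the vanishing limits~\eqref{holim0} at $\pm\infty$ by virtue of the limits~\eqref{holim} for $\varphi_A$ together with the correct choice of vertical shift.

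Next I would verify that the two hypotheses on $(a,b,c,d,A)$ in the corollary -- namely $c>\max\{d+A,0\}$ and $(c-d-A)^2+4Ab>0$ -- translate exactly into the hypotheses $c>\max\{d',0\}$ required to apply Theorem~\ref{tsw} to the shifted equation, and in particular ensure that the two stationary points $p_1,p_2$ of the phase-plane analysis are distinct so that the homoclinic orbit $\Gamma$ and the branches $\Sigma_1,\Sigma_2$ genuinely exist. This is a short algebraic check comparing the discriminant that determines the stationary points of~\eqref{BBMs} with $A\neq 0$ to the one arising after the shift.

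Finally, Theorem~\ref{tsw} applied to $\varphi_0$ with $d$ replaced by $d'$ yields the two explicit profiles (smooth and singular)
\begin{equation*}
\varphi_0(y) = \frac{6(c-d')}{b}\cdot\frac{1}{1\pm\cosh\!\bigl(\sqrt{(c-d')/(ac)}\,y\bigr)},
\end{equation*}
and undoing the translation $\varphi_A=\varphi_0+A$ produces the stated formulas. The only real obstacle is the bookkeeping between the three related constants -- the integration constant $A$ from~\eqref{BBMs}, the vertical shift used in the change of variable, and the effective coefficient $d'$ appearing after substitution -- and ensuring that the signs under the $\sqrt{\cdot}$ and in the denominator $1\pm\cosh(\cdot)$ match those of the shifted Theorem~\ref{tsw}; once this is set up correctly the result follows by direct substitution.
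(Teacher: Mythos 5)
Your proposal follows essentially the same route as the paper: the vertical shift $\varphi_0=\varphi_A-A$, the observation that the nonlinearity turns this shift into a modified linear transport coefficient (the paper's equation~\eqref{trans}), an application of Theorem~\ref{tsw} to the shifted profile, and then undoing the translation. If anything your bookkeeping is the more careful one, since substituting $\varphi_A=\varphi_0+A$ into~\eqref{BBMf} produces the term $Ab\,\varphi_0'$, i.e.\ an effective coefficient $d+Ab$ rather than the $d+A$ written in~\eqref{trans}, and making this explicit is exactly what is needed when matching the hypotheses of Theorem~\ref{tsw}.
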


\begin{rema}\label{perOrb}
On can see in Figure  \ref{pp2} that there also exist some periodic orbit of \eqref{BBMs}. It corresponds to a periodic wave for \eqref{BBMf}, but we are unable to derive explicit formulas, even if it is possible to express it in terms of the Weierstra\ss 's  function, see \cite{EKNN,WW}.
\end{rema}

\section{BBM equation on a  star}\label{sstar}

Let us consider a semi-infinite star, i.e., the finite graph with one vertex $\mv_1$  and $N$ edges of semi-infinite length. In view of \eqref{Kirc}, we suppose that the incidence vector $(\iota_{1i})_{1 \leq i \leq N}$, defined in \eqref{incidence}, is not $\pm (1)_{1 \leq i \leq N}$. Thus, up to relabeling, there exists $1\leq L<N$ such that 
\[
\iota_{1j} =\left\{
\begin{array}{ll}
1\qquad &\hbox{ for }1 \leq j \leq L,\\
-1 &\hbox{for  }L+1\le j \leq N.
\end{array}
\right.
\]
In other words, we have $L$ incoming edges $\me_1,\ldots,\me_L$ and $N-L$ outgoing edges $\me_{L+1},\ldots,\me_{N}$.
We identify the incoming edge $\me_j$ with the half-line $(-\infty_j,0]$, and the outgoing edges $\me_k$ with the half-line $[0,\infty_k)$.\\

\begin{figure}[h]
\begin{center}
\includegraphics[width=0.5\textwidth]{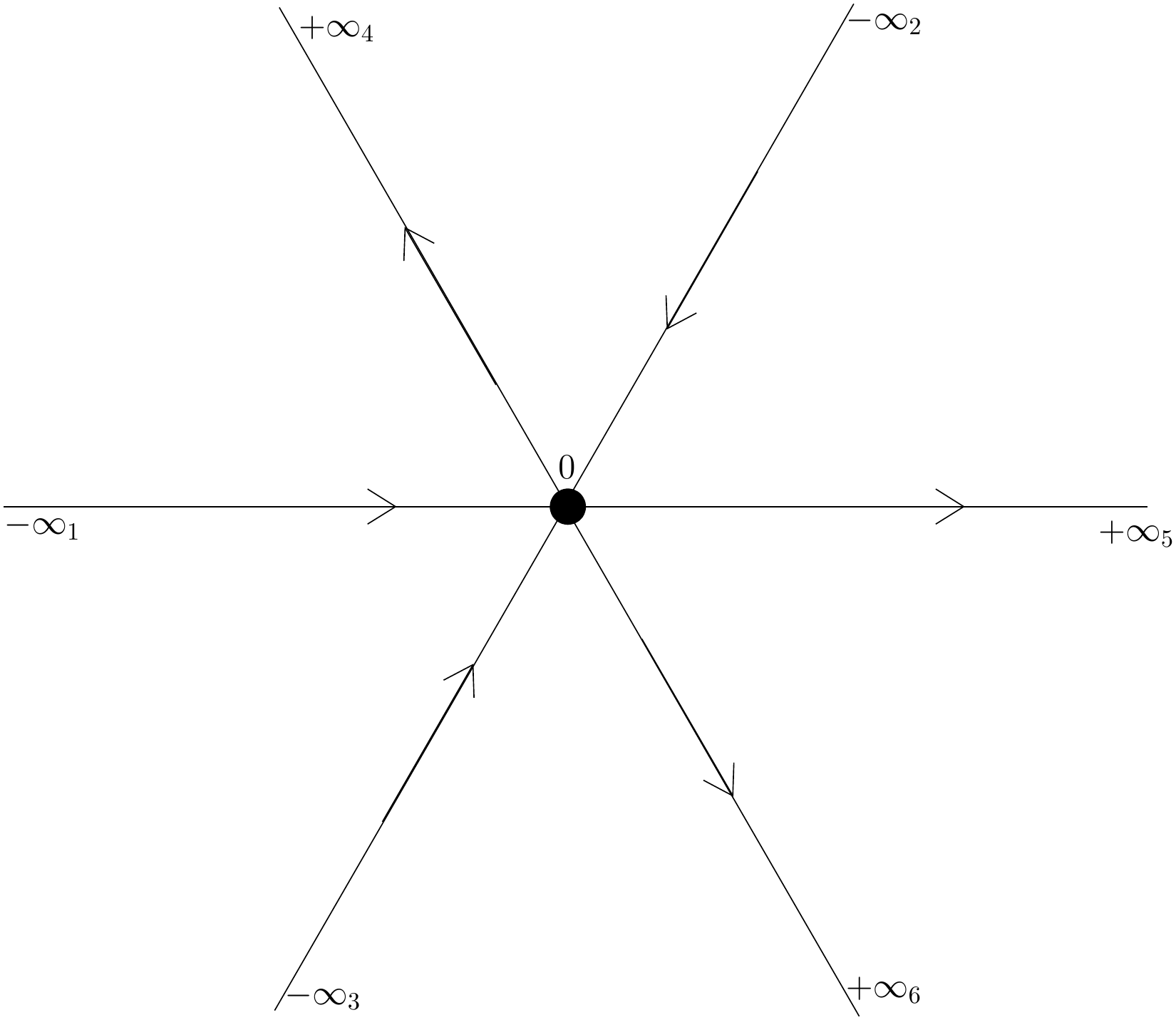}
\caption{A star with infinite edges.}\label{star}
\end{center}
\end{figure}

\noindent We want to construct a solution $u$ in the form $u_i(t,x)= \varphi_i(x-c_it)$ on each edge $\me_i$, where $\varphi_i$ is defined in accordance with~\eqref{smsw} by
\begin{equation}\label{smsw-s}
\varphi_i(z) := \frac{6(c_i-d_i)}{b_i} \cdot \frac{1}{1+ \cosh \Big( \sqrt{\frac{c_i-d_i}{a_ic_i}} z \Big)},\qquad  z \in \mathbb{R}.
\end{equation}
As in \eqref{prof1}, the contintuity condition \eqref{cc} at $\mv_1 =0$ leads to 
\begin{equation}\label{profS}
\varphi_1(z)=\varphi_i \Big(\frac{c_i}{c_1}z\Big), \quad\textrm{ for all } z\in\mathbb{R} \quad\textrm{ and } 1 \leq i \leq N.
\end{equation}
Hence,~\eqref{smsw-s} and \eqref{profS} imply that the continuity condition  \eqref{cc} is satisfied if
\begin{equation}\label{ampl}
\frac{c_1-d_1}{b_1} =\frac{c_i-d_i}{b_i}  \quad\textrm{ for all } 1 \leq i \leq N,
\end{equation}
and
\begin{equation}\label{larg}
c_1 \sqrt{ \frac{c_1-d_1}{a_1 c_1} } =c_i \sqrt{\frac{c_i-d_i}{a_i c_i}}  \quad\textrm{ for all } 1 \leq i \leq N.
\end{equation}
Then,~\eqref{Kirc} implies that the Kirchhoff condition \eqref{kc} is satisfied if
\begin{equation}\label{kc-s}
\sum_{i=1}^L \frac{a_i}{c_i} = \sum_{j=L+1}^N \frac{a_j}{c_j}.
\end{equation}
Similarly as in~\eqref{trans}, $v$ is solution of ${\rm(BBMG)}$ with $d_i \not = 0$ if and only if 
\[
u_{|_{\me_i}}:=\mv_{|_{\me_i}}+\frac{d_i}{b_i}
\] 
is a solution of the modified system
\begin{equation}\tag{BBMG'}
\left\{
\begin{array}{ll}
\partial_t u_i -a_i  \partial_i^2 \partial_t u_i +b_i u_i \partial_i u_i = 0 & \quad\textrm{ on each } \me_i \textrm{ for } t> 0, \\
u_j(v,t) = u_k(v,t) & \quad\textrm{ for } t\geq 0,\  \forall \ j,\ k \in N(\mv), \\
\displaystyle\sum_{j\in {\mathbf E} } \iota_{ij} a_j\partial_j u_j (v,t)=0 & \quad\textrm{ for } t\geq 0,\ \forall \ j \in N(\mv), %\\
%u_i(\cdot,0)=u_{0,i} & \quad\textrm{ in } \me_i \,
\end{array}
\right.
\end{equation}

\noindent Thus, without any loss of generality, we can suppose 
\begin{equation}\label{simpl}
d_i=0 \ \quad\textrm{ for all } \ 1\leq i\leq n.
\end{equation}
Summing up we have obtained.

\begin{theo}\label{thm_star}
Let~\eqref{simpl} hold. If the coefficients $a_i >0$ and $b_i \in \mathbb{R}^*$ satisfy the compatibility conditions
\begin{equation}\label{ccc}
\sqrt{\frac{a_i}{a_1} }= \frac{b_i}{b_1} >0 \ \quad\textrm{ for all } \ 1\leq i\leq n
\end{equation}
and
\begin{equation}\label{cck}
\sum_{i=1}^L b_i = \sum_{j=L+1}^N b_j,
\end{equation}
then there exists a solution $u$ of ${\rm(BBMG)}$ of the form
\[
u_{|_{\me_i}}(t,x) = \varphi(x_i-c_it + \tau_i),
\]
where $\varphi$ is defined as in~\eqref{smsw} with
\begin{equation}\label{ps}
c_1 >0 \ \quad\textrm{ and } \ c_i = \sqrt{ \frac{a_i}{a_1} } c_1.
\end{equation}
\end{theo}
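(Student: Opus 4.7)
The plan is to show that the hypotheses \eqref{ccc} and \eqref{cck} of the theorem, combined with the prescription $c_i=\sqrt{a_i/a_1}\,c_1$ in \eqref{ps}, translate exactly into the three scalar compatibility conditions \eqref{ampl}, \eqref{larg} and \eqref{kc-s} already derived in Section~\ref{prelim-bbm}. Once this is done, defining $u_i(t,x_i):=\varphi_i(x_i-c_it+\tau_i)$ on each edge $\me_i$, where $\varphi_i$ is the smooth solitary profile \eqref{smsw-s} with the given coefficients $a_i,b_i,c_i,d_i=0$ (and the $\tau_i$ chosen to be the same multiple of $\sqrt{a_i}$ across $i$, so that the $\cosh$--arguments at $\mv_1$ coincide), will produce the sought strong solution: Theorem~\ref{tsw} already guarantees that each $u_i$ satisfies the BBM equation on $\me_i$, while \eqref{ampl}--\eqref{larg} enforce the continuity condition \eqref{cc} at $\mv_1$ and \eqref{kc-s} enforces the Kirchhoff condition \eqref{kc}.

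For the width relation \eqref{larg}, with $d_i=0$ it reads $c_i/\sqrt{a_i}=c_1/\sqrt{a_1}$, which is nothing but the defining identity \eqref{ps} rearranged. For the amplitude relation \eqref{ampl}, with $d_i=0$ it reads $c_i/b_i=c_1/b_1$; I would deduce this by observing that \eqref{ccc} gives $b_i/b_1=\sqrt{a_i/a_1}$ while \eqref{ps} gives $c_i/c_1=\sqrt{a_i/a_1}$, so the two ratios coincide. Note that the positivity assumption in \eqref{ccc} guarantees $c_i>0$, hence $c_i>\max\{d_i,0\}=0$, which is exactly the hypothesis of Theorem~\ref{tsw}.

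Finally, I would address \eqref{kc-s}. Substituting $c_i=c_1\sqrt{a_i/a_1}$ gives $a_i/c_i=\sqrt{a_1 a_i}/c_1$, and inserting \eqref{ccc} in the form $\sqrt{a_i}=b_i\sqrt{a_1}/b_1$ turns this into the clean identity $a_i/c_i=(a_1/(c_1b_1))\,b_i$. Since the prefactor $a_1/(c_1b_1)$ is independent of $i$ and nonzero, \eqref{kc-s} collapses to $\sum_{i=1}^L b_i=\sum_{j=L+1}^N b_j$, which is exactly \eqref{cck}. I do not anticipate any substantive obstacle: the nontrivial content was packed into Theorem~\ref{tsw} and into the derivations of \eqref{ampl}, \eqref{larg}, \eqref{kc-s}, so what remains here is a chain of elementary substitutions; the only mildly delicate point is to keep track of signs and ratios while moving between the hypotheses on $(a_i,b_i)$ and the wave speeds $c_i$, which is precisely where the positivity built into \eqref{ccc} is used.
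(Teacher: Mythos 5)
Your proposal is correct and follows essentially the same route as the paper: reduce \eqref{ccc} and \eqref{ps} (with $d_i=0$) to the continuity conditions \eqref{ampl}--\eqref{larg}, and then compute $\frac{a_i}{c_i}=\frac{a_1}{c_1 b_1}\,b_i$ to see that the Kirchhoff condition \eqref{kc-s} is equivalent to \eqref{cck}, exactly as in the paper's proof. The extra remarks on the choice of the $\tau_i$ and on $c_i>\max\{d_i,0\}$ are harmless elaborations of the same argument.
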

\begin{proof}
Combining~\eqref{ccc} and \eqref{ps}, we obtain~\eqref{ampl} and \eqref{larg} with $d_i=0$ for all $1\leq i \leq N$. Then, using the definition of the propagation speeds and \eqref{ccc}, we have
$$
\frac{a_i}{c_i} = \frac{a_i \sqrt{a_1}}{\sqrt{a_i}c_1} = \frac{\sqrt{a_1}}{c_1} \sqrt{a_i} = \frac{\sqrt{a_1}}{c_1}  \Big( \frac{b_i}{b_1}  \sqrt{a_1} \Big).
$$
Thus, up the positive constant $\frac{a_1}{b_1c_1}$,~\eqref{cck} is equivalent to \eqref{kc-s}.
\end{proof}

\noindent In the case $N=3$, recalling the results by Bona and Cascaval in~\cite{BC}, when the initial data $u_0$ is the initial profile of a wave
$$
{u_0}_{|_{{\me_i}}}(x_i) = \varphi_i(x_i), \quad\textrm{ for all } x_i \in \me_i, \quad\textrm{ for all } 1\leq i \leq N,
$$
then the unique solution of ${\rm(BBMG)}$ is the solitary wave built in Theorem \ref{thm_star}. Moreover, our compatibility conditions \eqref{ccc} and \eqref{cck} do not seem to be artificial in view of the numerical computations in \cite{BC} which exhibit a reflected wave in the case $N=3$ and $a_i=b_i=d_i=1$ for all $i$.

\begin{rema}
In view of~\eqref{ampl} and \eqref{larg}, if all the coefficients $a_i$, $b_i$ and $d_i$ are equal, i.e., if they all agree with some common value $a$, $b$ and $d$, then all the propagation speeds are equal. Thus the Kirchhoff condition \eqref{kc-s} is satisfied if and only if 
\[
N-L=L.
\]
\end{rema}

\section{BBM equation on a tree}\label{stree}
Now, we consider the case where the graph is a directed tree without boundary conditions at boundary vertices. We do not regard our tree as rooted, and in particular at each edge there may be more than one incoming edge.\\

\begin{figure}[h!]
\begin{center}
\includegraphics[width=0.6\textwidth]{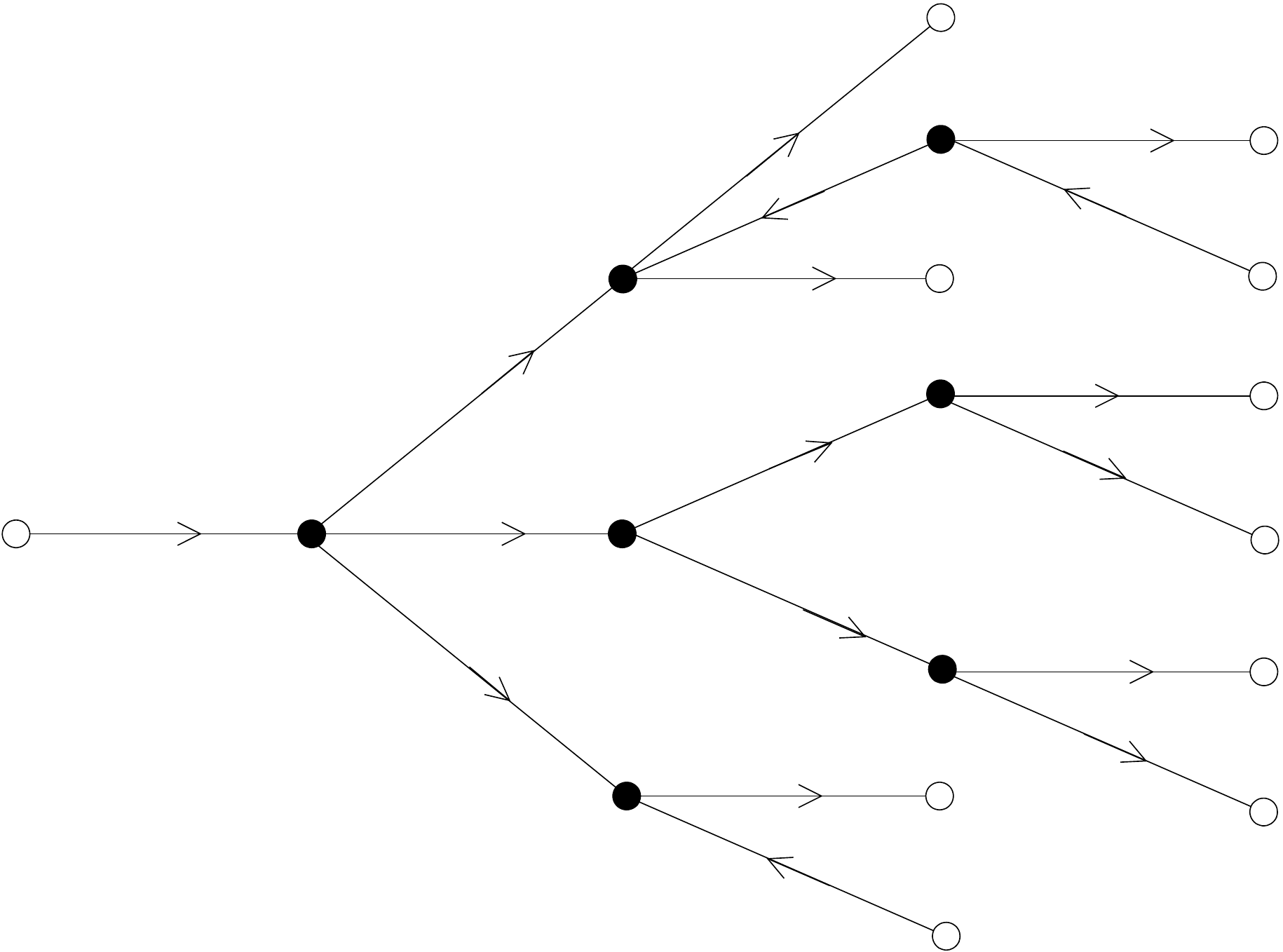}
\caption{A finite tree.}\label{tree}
\end{center}
\end{figure}

\noindent As in the previous section, we want to construct a solution which is a solitary wave. Since paths with more than two edges can occur, we need to add a parameter $\tau_i$ in $u_{|_{\me_i}}$ to account for the edge lengths. Thus, we look for a solution  $u$ in the form 
\[
u_i(t,x)= \varphi_i(x-c_it + \tau_i),\qquad i=1,\ldots,n,
\]
where $\varphi_i$ is defined by \eqref{smsw}.\\

\noindent From the continuity condition \eqref{cc}, we deduce that for any vertex $\mv_k$ and any edges $\me_i$ and $\me_j$ such that $\iota_{ki}=1$ and $\iota_{kj}=-1$, we have
$$
 \varphi_i(\ell_i-c_it +\tau_i) =\varphi_j(-c_jt +\tau_j).
$$
Hence, adjusting~\eqref{ampl} and\eqref{larg}, we need to satisfy the following conditions
\begin{equation}\label{ampl-t}
\frac{c_j-d_j}{b_j} =\frac{c_i-d_i}{b_i}  \quad\textrm{ for all } i,j \in N(\mv_k), \quad\textrm{ for all }\mv_k\in V_r,
\end{equation}
\begin{equation}\label{larg-t}
c_j\sqrt{ \frac{c_j-d_j}{a_j c_j} } = c_i \sqrt{\frac{c_i-d_i}{a_i c_i}}  \quad\textrm{ for all } i,j \in N(\mv_k), \quad\textrm{ for all }\mv_k\in V_r
\end{equation}
and 
\begin{equation}\label{transl-t}
\sqrt{ \frac{c_j-d_j}{a_j c_j} } (\ell_j +\tau_j) = \sqrt{\frac{c_i-d_i}{a_i c_i}} \tau_i \quad\textrm{ for all } i,j \in N(\mv_k), \quad\textrm{ for all }\mv_k\in V_r
\end{equation}
Up to relabelling we can of course assume that $\mv_0$ is the root of the tree. Moreover,  we can choose $\tau_0=0$ assuming  $\mv_0 \in \me_0$. Since our graph is a tree, each vertex $\mv_i$ is linked to $\mv_0$ by exactly one path. Let us denote it by $(\me_{i_1}, \ \me_{i_2}, \ \cdots, \ \me_{i_k})$ for some $k \in \mathbb{N}$. Thus, using~\eqref{transl-t}, $\tau_i$ can be uniquely determined by the lengths $( \ell_{i_1},\ \cdots, \ \ell_{i_k})$ and by the coefficients $a_{i_j}, \ b_{i_j}, \ c_{i_j}$ and $d_{i_j}$ appearing along the path from $\mv_0$ to $\mv_i$. As in the previous section, we can without loss of generality assume that~\eqref{simpl} holds. Following the idea of Theorem \ref{thm_star} we obtain:

\begin{theo}\label{thm_tree}
Let~\eqref{simpl} hold. If the coefficients $a_i >0$ and $b_i \in \mathbb{R}^*$ satisfy the compatibility conditions
\begin{equation}\label{ccc-t}
\sqrt{\frac{a_i}{a_j} }= \frac{b_i}{b_j} >0  \quad\textrm{ for all } i,j \in N(\mv_k), \quad\textrm{ for all }\mv_k\in V_r
\end{equation}
and
\begin{equation}\label{cck-t}
\sum_{i  \in  \mathbf{E}} \iota_{ki} b_i = 0,  \quad\textrm{ for all }\mv_k\in V_r
\end{equation}
then there exists a solution $u$ of ${\rm(BBMG)}$ of the form
\[
u_{|_{\me_i}}(t,x) = \varphi(x_i-c_it + \tau_i)
\]
where $\varphi$ is defined by~\eqref{smsw} and the propagation speeds are given by
\begin{equation}\label{ps-t}
c_0 >0 \ \quad\textrm{ and } \ c_i = \sqrt{ \frac{a_i}{a_j} } c_j \quad\textrm{ for all }\mv_k\in V_r \quad\textrm{ and all } i,j \in N(\mv_k).
\end{equation}
Moreover, the parameters $\tau_i$ are defined by
\begin{equation}\label{transl-t2}
\tau_0 = 0 \ \quad\textrm{ and } \ \tau_i = \sqrt{ \frac{a_i}{a_j} } \tau _j +\ell_j \quad\textrm{ for all } i,j \in N(\mv_k), \quad\textrm{ for all }\mv_k\in V_r.
\end{equation}
\end{theo}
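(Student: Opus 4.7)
The plan is to mimic the proof of Theorem~\ref{thm_star}, propagating the construction along the tree from a fixed root $\mv_0$. Because $\Gamma$ is a tree, between $\mv_0$ and any other vertex there is a unique path, so the recursive definitions~\eqref{ps-t} and~\eqref{transl-t2} unambiguously assign a speed $c_i$ and a shift $\tau_i$ to every edge $\me_i$: pick any edge $\me_0$ incident in $\mv_0$, set $c_0>0$ and $\tau_0=0$, and then propagate outwards, using at each ramification vertex $\mv_k$ that some neighbouring edge $\me_j$ has already been treated to define $c_i,\tau_i$ for the remaining edges $\me_i\in N(\mv_k)$.

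First I would verify that, with these choices and with $\varphi_i$ given by~\eqref{smsw-s}, each $u_i(t,x):=\varphi_i(x-c_it+\tau_i)$ is a classical solution of the BBM equation on $\me_i$: this is immediate from Theorem~\ref{tsw} together with our assumption~\eqref{simpl} that $d_i=0$. The real work is the compatibility at each ramification vertex $\mv_k$, which amounts to checking the three identities~\eqref{ampl-t}, \eqref{larg-t}, \eqref{transl-t}, together with the Kirchhoff condition~\eqref{Kirc}. For~\eqref{ampl-t}, with $d_i=0$, I must show $\frac{c_i}{b_i}=\frac{c_j}{b_j}$ for all $i,j\in N(\mv_k)$; this follows at once from~\eqref{ccc-t} and~\eqref{ps-t} because $c_i/c_j=\sqrt{a_i/a_j}=b_i/b_j$. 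For~\eqref{larg-t}, with $d_i=0$, the expression becomes $\sqrt{c_i/a_i}\cdot c_i=\sqrt{c_i^3/a_i}$; using $c_i=\sqrt{a_i/a_j}\,c_j$ gives $c_i^3/a_i=(a_i/a_j)^{3/2}c_j^3/a_i=(a_j)^{-3/2}a_i^{1/2}c_j^3$, which simplifies to $c_j^3/a_j$ only after one rewrites carefully — so this is the slightly delicate step and I would treat it as a short explicit computation. The identity~\eqref{transl-t} with $d_i=0$ reduces exactly to $\tau_i=\sqrt{a_i/a_j}\,\tau_j+\ell_j$, which is just the definition~\eqref{transl-t2}.

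Next, I would verify the Kirchhoff condition. Using~\eqref{profS}-type identities at $\mv_k$ and the computation already carried out in the star case, the Kirchhoff condition at $\mv_k$ is equivalent to $\sum_{i\in \mathbf E}\iota_{ki}\,a_i/c_i=0$. Using $c_i=\sqrt{a_i/a_j}\,c_j$ for a fixed reference neighbour $\me_j$, we get $a_i/c_i=\sqrt{a_ia_j}/c_j=(\sqrt{a_j}/c_j)\sqrt{a_i}$, and by~\eqref{ccc-t} the latter equals, up to the positive common factor $\sqrt{a_j}\,\sqrt{a_j}/(c_jb_j)$ independent of $i$, precisely $b_i$. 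Hence the Kirchhoff condition at $\mv_k$ is equivalent to $\sum_{i\in\mathbf E}\iota_{ki}b_i=0$, which is exactly~\eqref{cck-t}.

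Finally I would observe that the whole construction is consistent across the tree precisely because there are no circuits: any recursive rule of the form $c_i=\sqrt{a_i/a_j}\,c_j$, $\tau_i=\sqrt{a_i/a_j}\,\tau_j+\ell_j$ may in general yield conflicting values when two different paths to the same vertex exist, but on a tree each edge is reached from $\mv_0$ by a unique sequence of ramification vertices, so no conflict can arise. The reduction~\eqref{simpl} to $d_i=0$ is justified exactly as in the star case via the shift $u_i\mapsto u_i+d_i/b_i$. The main obstacle is therefore not conceptual but notational — keeping track of which edges are ``already known'' at each ramification vertex — and I expect the cleanest way to carry this out is by induction on the graph distance from $\mv_0$.
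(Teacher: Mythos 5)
Your proposal is correct and follows essentially the same route as the paper's (very terse) proof: reduce to $d_i=0$, define the $c_i$ and $\tau_i$ recursively from a root, using the tree structure (unique paths, hence no conflicting assignments) and finiteness of each path, check continuity at every ramification vertex by matching amplitude, width and shift of the profiles \eqref{smsw}, and verify the Kirchhoff condition by rewriting $a_i/c_i$ as a positive constant times $b_i$, exactly as in Theorem~\ref{thm_star}. One small correction: the step you call delicate in \eqref{larg-t} is in fact immediate, since with $d_i=0$ the width factor is $\sqrt{\tfrac{c_i}{a_ic_i}}=\tfrac{1}{\sqrt{a_i}}$, so \eqref{larg-t} reads $c_i/\sqrt{a_i}=c_j/\sqrt{a_j}$ and follows at once from \eqref{ps-t}; your expression $\sqrt{c_i^3/a_i}$ comes from dropping the $c_i$ in the denominator inside the square root and would in fact \emph{not} be conserved along the recursion, so that one line should be redone with the correct formula. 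Note also that \eqref{transl-t} with $d_i=0$ yields $\tau_i=\sqrt{a_i/a_j}\,(\tau_j+\ell_j)$ rather than literally \eqref{transl-t2}; this discrepancy is already present in the paper's own statement, so it is not a flaw of your argument, but the continuity check should be anchored to \eqref{transl-t} rather than to the claim that the two recursions coincide exactly.
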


\begin{proof}
As in the proof of Theorem \ref{thm_star},~\eqref{ccc-t} and \eqref{ps-t} imply~\eqref{larg-t} and \eqref{ampl-t} when $d_i=0$, and since all the vertices (and edges) are connected to each other, the propagation speeds are well-defined recursively, starting from $c_0$. Then,~\eqref{transl-t2} permits to compute all the $\tau_i$ starting from $\tau_0$ since our graph is a  tree. (Observe that even if the graph is infinite, the path from any given vertex to $v_0$ has certainly finite length.) Finally,~\eqref{cck-t} is equivalent to the Kirchhoff condition \eqref{Kirc} when we re-write $\frac{a_j}{c_j}$ in the form $\frac{a_{i_0}}{c_{i_0} b_{i_0} }  b_j$ for an arbitrarily chosen $i_0 \in N(\mv_k)$.
\end{proof}

\begin{rema}
If the graph is finite and therefore we are in the setting considered in~\cite{BC}, then  the wave constructed in Theorem \ref{thm_tree} is necessarily the unique solution of ${\rm(BBMG)}$, provided that the initial data $u_0$ is a solitary wave itself.
\end{rema}

\begin{rema}
In view of~\eqref{ampl-t} and \eqref{larg-t}, if all the coefficients $a_i$, $b_i$ and $d_i$ are respectively equal to $a$, $b$ and $d$, then all the propagation speeds agree with a common value $c$. Thus the Kirchhoff conditions \eqref{cck-t} are satisfied if and only if 
\[
 {\rm card}\; \{ i \ / \ \iota_{ki} =1 \} ={\rm card}\; \{ i/ \ \iota_{ki} =-1 \} \qquad \hbox{ for all }\mv_k \in V_r.
\]
In particular, recall this condition is satisfied (in fact, it is equivalent) to the directed graph being Eulerian (see e.g.~\cite[Thm.~4.4]{ChaLesZha10}) provided that $V_b=\emptyset$, i.e., that each vertex is of ramification type.
\end{rema}

\section{Networks with circuits}\label{scircuits}

\noindent In this section, we consider networks which contain circuits. We first treat the case of a graph having one directed circuit, i.e., a path linking a vertex $v \in V$ to itself following the incidence and having more than one edge (see Fig \ref{d-cir}). 

\begin{figure}[h]
\begin{center}
\includegraphics[width=0.6\textwidth]{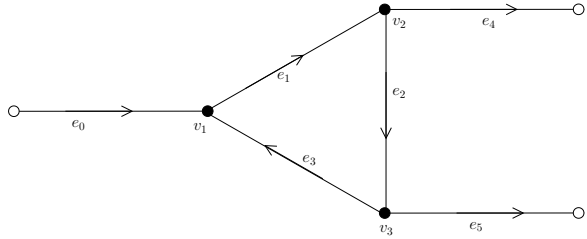}
\caption{A directed circuit.}\label{d-cir}
\end{center}
\end{figure}

\noindent As in Figure \ref{d-cir}, we denote by $\me_1, \ \me_2, \ \cdots, \ \me_n$  a directed path joining a vertex $\mv_1$ to itself, and let $\mv_1, \ \mv_2, \ \cdots, \ \mv_n$ the vertices of this path. We look for a solution $u$ of ${\rm(BBMG)}$ in the form $u_i(t,x)= \varphi_i(x-c_it +\tau_i)$ on each edge $\me_i$. According to~\eqref{prof1}, along the directed circuit, we have
\begin{equation}\label{cont-dc}
\varphi_{i+1}(z) = \varphi_i\left(\ell_i + \tau_i -\frac{c_i}{c_{i+1}}\tau_{i+1} +\frac{c_i}{c_{i+1}}z \right) \ \quad\textrm{ for all } \ z \in \mathbb{R},
\end{equation}
whence
\begin{eqnarray*}
\varphi_{i+2}(z)& =& \varphi_{i+1}\left(\ell_{i+1} + \tau_{i+1} -\frac{c_{i+1}}{c_{i+2}}\tau_{i+2} +\frac{c_{i+1}}{c_{i+2}}z \right)\\
& =& \varphi_i\left( \ell_i  +\frac{c_i}{c_{i+1}} \ell_{i+1} + \tau_i-\frac{c_i}{c_{i+2}} \tau_{i+2}+\frac{c_i}{c_{i+2}}z\right)  \quad\textrm{ for all } \ z \in \mathbb{R}
\end{eqnarray*}

\noindent By induction on the length of whole path we can thus prove that
\begin{equation}\label{per-dc}
\varphi_1(z) = \varphi_1\left(z+\sum_{i=1}^n \frac{c_1}{c_i}\ell_i \right) \ \quad\textrm{ for all } \ z \in \mathbb{R}.
\end{equation}
Thus any travelling wave is necessarily periodic and in particular we obtain:

\begin{lem}
If the graph contains a directed circuit, then there exist no solitary wave solutions of ${\rm(BBMG)}$.
\end{lem}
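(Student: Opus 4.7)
The plan is to argue by contradiction, exploiting the periodicity identity \eqref{per-dc} that has just been derived along the directed circuit against the decay requirement built into the definition of a solitary wave.

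First I would observe that the quantity $T := \sum_{i=1}^n \frac{c_1}{c_i}\ell_i$ is a strictly positive real number, because the standing hypothesis that the wave truly travels forces every propagation speed $c_i$ to lie in $(0,\infty)$ and every edge length $\ell_i$ to be positive. Thus \eqref{per-dc} asserts precisely that the profile $\varphi_1$ on the initial edge of the circuit is $T$-periodic on all of $\mathbb{R}$.

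Next I would invoke the hypothesis that $u$ is a solitary travelling wave. By Definition~\ref{defi:wave}, the finite limits $L^{\pm} := \lim_{z\to\pm\infty}\varphi_1(z)$ must exist in $\mathbb{R}$. Fixing any $z\in\mathbb{R}$ and iterating the periodicity yields $\varphi_1(z) = \varphi_1(z+kT)$ for every $k\in\mathbb{N}$; letting $k\to\infty$ and using only pointwise convergence of the right-hand side to $L^+$ gives $\varphi_1(z) = L^+$, so $\varphi_1$ is identically constant.

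From here I would propagate constancy through the whole network. The relation \eqref{cont-dc} expresses each $\varphi_{i+1}$ as an affine reparametrisation of $\varphi_i$, so every profile along the circuit is constant; then the continuity identity \eqref{prof1} together with connectedness of $G$ forces each $\varphi_j$ on every edge of $G$ to be constant as well. But then $\varphi_j' \equiv 0$ on every edge, which by Definition~\ref{defi:wave} makes $u$ a stationary travelling wave, contradicting the standing hypothesis (made after the stationarity lemma) that the wave is non-constant. This contradiction rules out the existence of a solitary travelling wave whenever $G$ has a directed circuit.

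I expect no genuine obstacle here beyond bookkeeping: the only substantive step is the elementary observation that a periodic function with a finite one-sided limit at infinity must be constant. The care needed is merely (i) checking that $T>0$, which uses the non-degeneracy of speeds and lengths, and (ii) noting that \eqref{per-dc} can be iterated to all positive integer multiples of $T$ before passing to the limit.
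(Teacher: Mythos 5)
Your proof is correct and follows essentially the same route as the paper: the paper treats the lemma as an immediate consequence of the periodicity identity \eqref{per-dc} (a travelling wave along a directed circuit is periodic with period $\sum_i \frac{c_1}{c_i}\ell_i>0$, which is incompatible with a nonconstant profile having limits at $\pm\infty$). You merely make explicit the elementary steps -- positivity of the period, iteration of the periodicity against the limit at infinity, and propagation of constancy via \eqref{prof1} to contradict the standing non-constancy assumption -- which the paper leaves implicit.
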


\noindent Next, we consider the case where the graph contains circuits, but no directed circuits (see Fig \ref{nd-cir}).

\begin{figure}[h]
\begin{center}
\includegraphics[width=0.8\textwidth]{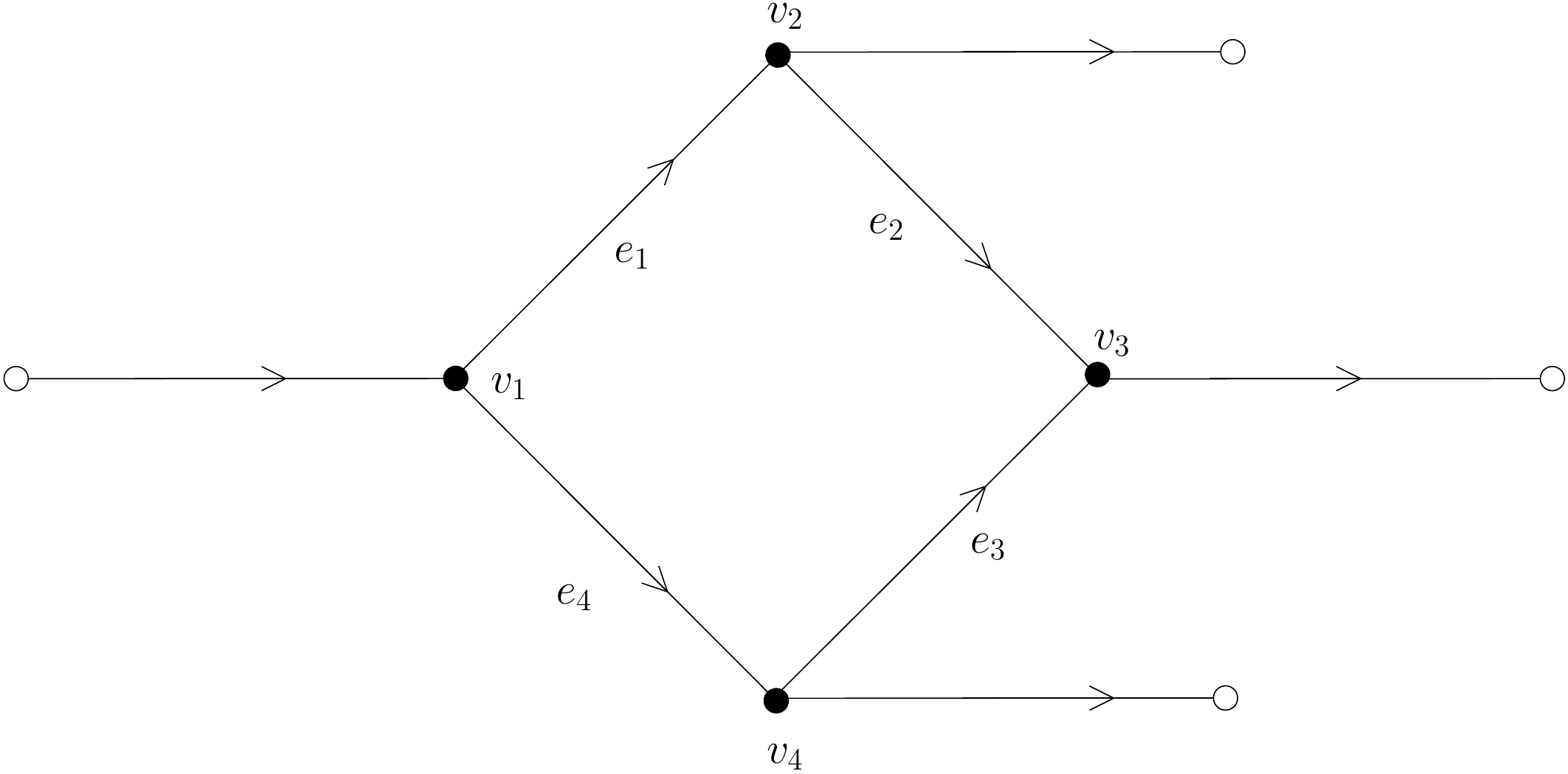}
\caption{A circuit which is not a directed circuit.}\label{nd-cir}
\end{center}
\end{figure}

It turns out that also on a graph containing undirected circuits a certain compatibility condition relating the lengths of the  different paths between two vertices has to be satisfied, in order for a travelling wave to exist. To begin with, we discuss the following simple example.

\begin{exa}\label{necess}
Let us begin by considering the simple case of the graph $G$ in Figure \ref{nd-cir}.  It is natural to address the following question: After splitting the incoming solitary wave in two waves at $\mv_1$ along the two paths $(\me_1,\me_2)$ and $(\me_4,\me_3)$, can we adjust the propagation speeds (and hence find suitable coefficients of $\rm(BBM)$) so that the two waves can eventually be glued to form one single outgoing wave at $\mv_3$?\\ In order to answer this question affirmatively we need to show that the hypotheses in Theorem \ref{thm_tree} and a compatibility condition stemming from \eqref{cont-dc} can be satisfied simultaneously. At vertex $\mv_3$, the continuity conditions along the path $(\me_1,\me_2)$ and the path $(\me_4,\me_3)$ and~\eqref{prof1} imply
\begin{equation}\label{cc-ndc}
l_1+ \frac{c_1}{c_2}l_2 = \frac{c_1}{c_4} \left(l_4+\frac{c_4}{c_3}l_3 \right).
\end{equation}
When we link $\mv_1$ to itself, from~\eqref{prof1}, we get
$$
\varphi_1(z) = \varphi_1(z + l_1+\frac{c_1}{c_2}l_2 - \frac{c_1}{c_3}l_3 - \frac{c_1}{c_4}l_4) \ \quad\textrm{ for all } \ z \in \mathbb{R},
$$
which is verified since $l_1+\frac{c_1}{c_2}l_2 =\frac{c_1}{c_3}l_3 + \frac{c_1}{c_4}l_4$ according \eqref{cc-ndc}. Linking $\mv_2$ to itself, we have
\begin{eqnarray}\label{contphi2}
\varphi_2(z) & = & \varphi_2(z - \frac{c_2}{c_1} l_1 - l_2 +\frac{c_2}{c_4}l_4 + \frac{c_2}{c_4}l_3 ) {} \nonumber \\ 
 & = & \varphi_2(z  - \frac{c_2}{c_1} \Big[ l_1 + \frac{c_1}{c_2 }l_2 -\frac{c_1}{c_4}l_4 - \frac{c_1}{c_4}l_3   \Big]) \quad\textrm{ for all } \ z \in \mathbb{R},
\end{eqnarray}
which is also satisfied because of \eqref{cc-ndc}. In the same way, linking $\mv_3$ and $\mv_4$ to themselves respectively,  the compatibility condition is verified if~\eqref{cc-ndc} is satisfied. \\
Using the definition of the propagtion speeds \eqref{ps-t}, we have 
$$
\frac{c_1}{c_j}= \prod_{i=1}^{j-1}  \frac{c_i}{c_{i+1}} = \prod_{i=1}^{j-1}  \sqrt{\frac{a_i}{a_{i+1}}} =  \sqrt{\frac{a_1}{a_j}} 
$$
and we can re-write~\eqref{cc-ndc} as.
\begin{equation}\label{compcond}
l_1+ \sqrt{\frac{a_1}{a_2}} l_2 = \sqrt{\frac{a_1}{a_4}} l_4 + \sqrt{\frac{a_1}{a_3}}l_3 =\sqrt{{a_1}} \Bigg( \frac{1}{\sqrt{a_4}} l_4 + \frac{1}{\sqrt{a_3}}l_3\Bigg).
\end{equation}
We conclude that~\eqref{compcond} is a necessary condition for the existence of a solitary wave on the graph $G$.
\end{exa}

\noindent In the general case, let us consider a graph $G$ with undirected circuits, but without any directed circuit. 

\begin{nota}
Let us denote $V_{\textrm{out}}$ the set of all vertex $\mv$ having at least two (directed) paths starting at $\mv$ and going to the same vertex $\mw$. For a vertex $\mv_i \in V_{\textrm{out}}$, we have at least two directed paths (whose lengths we denote by $n$ and $p-n$, respectively) ending at $\mw\in V$ along which the incidence factors are all equal to $1$. Gluing them, we obtain an (undirected)  path $(\me_{i_1},\ \me_{i_2},\ \cdots, \ \me_{i_n}, \ \me_{i_{n+1}},\  \cdots, \ \me_{i_p})$ with $1<n<p$ in $\mathbb{N}$ such that  $\mw = \me_{i_n} \cap \me_{i_{n+1}}$ and $\mv_i= \me_{i_1} \cap  \me_{i_p}$. We denote it by $[\mv_i, \mv_i]^n_p$.
\end{nota}

Along this undirected circuit, we compute a compatibility condition inspired by \eqref{cc-ndc} to satisfy the transmission conditions at $\mw$:
$$
\sum_{j=1}^n \frac{c_{i_1}}{c_{i_j}}l_{i_j} = \sum_{j=n+1}^p \frac{c_{i_1}}{c_{i_j}}l_{i_j},
$$
which can be re-written using conditions \eqref{ps-t}. Thus, we obtain the following.

\begin{theo}
Let~\eqref{simpl} hold. Suppose that the coefficients $a_i >0$ and $b_i \in \mathbb{R}^*$ satisfy the compatibility conditions \eqref{ccc-t} and \eqref{cck-t} for all $\mv_k\in V_r$ and all $i,j \in N(\mv_k)$. Then the following assertions hold.
\begin{enumerate}[(1)]
\item In order for a travelling wave solution to exist on $G$, the additional compatibility condition
\begin{equation}\label{comp_circuits}
\sum_{j=1}^n \frac{1}{\sqrt{a_{i_j}}}l_{i_j} = \sum_{j=n+1}^p \frac{1}{\sqrt{a_{i_j}}}l_{i_j} \ \quad\textrm{ for }\mv_i \in V_{out} \quad\textrm{ and all paths } [\mv_i, \mv_i]^n_p
\end{equation}
has to be satisfied.
\item Conversely, if~\eqref{comp_circuits} is satisfied, then there exists a solution $u$ of ${\rm(BBMG)}$ of the form 
\[
u_{|_{\me_i}}(t,x) = \varphi(x_i-c_it + \tau_i),
\]
where $\varphi$ is defined by~\eqref{smsw}, the propagation speeds are defined by
\begin{equation}\label{ps-ndc}
c_0 >0 \ \quad\textrm{ and } \ c_i = \sqrt{ \frac{a_i}{a_j} } c_j \quad\textrm{ for all }\mv_k\in V_r \quad\textrm{ and all } i,j \in N(\mv_k)
\end{equation}
and the parameters $\tau_i$ are defined by
\begin{equation}\label{Tau_cir}
\tau_0 = 0 \ \quad\textrm{ and } \ \tau_i = \sqrt{ \frac{a_i}{a_j} } \tau _j +l_j \quad\textrm{ for all }\mv_k\in V_r \quad\textrm{ and all } i,j \in N(\mv_k).
\end{equation}
\end{enumerate}
\end{theo}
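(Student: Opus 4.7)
The plan is to treat necessity and sufficiency separately. For part~(1), I would fix a vertex $\mv_i\in V_{\textrm{out}}$ and an undirected circuit $[\mv_i,\mv_i]^n_p=(\me_{i_1},\ldots,\me_{i_p})$. Its two directed sub-paths $(\me_{i_1},\ldots,\me_{i_n})$ and $(\me_{i_p},\ldots,\me_{i_{n+1}})$ both issue from $\mv_i$ and terminate at the common vertex $\mw=\me_{i_n}\cap\me_{i_{n+1}}$ (all incidences being $+1$ by assumption). Iterating the profile-transport relation~\eqref{profuni} (equivalently the first step of~\eqref{cont-dc}) along each sub-path and matching the two resulting expressions for $\varphi_{i_1}$ at $\mw$, exactly as performed in Example~\ref{necess}, forces the identity
\begin{equation*}
\sum_{j=1}^n \frac{c_{i_1}}{c_{i_j}}\,\ell_{i_j}=\sum_{j=n+1}^p \frac{c_{i_1}}{c_{i_j}}\,\ell_{i_j}.
\end{equation*}
Combined with the recursion~\eqref{ps-ndc}, which telescopes to $c_{i_1}/c_{i_j}=\sqrt{a_{i_1}/a_{i_j}}$ along the path, dividing through by $\sqrt{a_{i_1}}$ yields~\eqref{comp_circuits}.

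For part~(2) I would pick a spanning tree $T\subset G$ and invoke Theorem~\ref{thm_tree} on the tree-restricted problem: this gives on every edge of $T$ a profile $\varphi$ of the form~\eqref{smsw}, together with speeds and translations defined by~\eqref{ps-ndc} and~\eqref{Tau_cir}. Since both recursions are one-step relations between neighbouring edges, their values on a non-tree edge $\me$ can be computed from either endpoint of $\me$, each computation going through the unique $T$-path connecting that endpoint to $\mv_0$. The crucial verification is that the two resulting values coincide: the speed $c$ is automatically path-independent because $\sqrt{a_{i_1}/a_{i_j}}$ telescopes multiplicatively and is already well-defined under~\eqref{ccc-t}, while the translation $\tau$ is path-independent precisely when the compatibility~\eqref{comp_circuits} holds for the undirected circuit obtained by adjoining $\me$ to $T$.

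With $\varphi_i,c_i,\tau_i$ consistently defined on all of $G$, the continuity condition~\eqref{cc} holds at tree-vertices by Theorem~\ref{thm_tree} and at the two endpoints of each non-tree edge by the very computation of part~(1) run backwards. The Kirchhoff condition~\eqref{kc} reduces, after rewriting $a_j/c_j$ as $(a_{i_0}/(c_{i_0}b_{i_0}))\,b_j$ for a reference edge $i_0\in N(\mv_k)$, to~\eqref{cck-t}, exactly as in the proof of Theorem~\ref{thm_tree}. The main obstacle I anticipate is handling the path-independence when several undirected circuits interact; the clean way around it is to note that the cycle space of $G$ relative to $T$ is generated by the fundamental cycles closed by the non-tree edges, so imposing~\eqref{comp_circuits} on each such fundamental cycle propagates, by linear combination, to every circuit in $V_{\textrm{out}}$ and thereby guarantees global consistency.
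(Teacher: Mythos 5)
Your argument is correct and, at its core, coincides with the paper's: necessity is obtained, as in the paper, by iterating the transport relation \eqref{prof1}/\eqref{cont-dc} along the two directed sub-paths of a circuit $[\mv_i,\mv_i]^n_p$ and matching the profiles at the common terminal vertex $\mw$ -- exactly the computation of Example~\ref{necess} -- and sufficiency rests on the same telescoping identity $c_{i_1}/c_{i_j}=\sqrt{a_{i_1}/a_{i_j}}$ together with the observation that the accumulated shift around a circuit, $c_{i_k}\bigl[\sum_{j\le n}\ell_{i_j}/c_{i_j}-\sum_{j>n}\ell_{i_j}/c_{i_j}\bigr]$, vanishes precisely by \eqref{comp_circuits} and \eqref{ps-ndc}. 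Where you genuinely differ is the organization of part (2): the paper constructs the wave directly ``as in Theorem~\ref{thm_tree}'' and then verifies, circuit by circuit, that there is no continuity jump, whereas you first solve on a spanning tree $T$ and recast the issue as path-independence of the translations $\tau_i$ (the speeds being automatically consistent), reducing global consistency to the fundamental cycles of $T$ via the cycle space. This is a cleaner justification of the well-definedness of the $\tau_i$ when several circuits interact, a point the paper leaves implicit. One detail to tighten: a fundamental cycle of an arbitrary spanning tree need not be of the form $[\mv_i,\mv_i]^n_p$ (its orientation may change more than twice along the cycle), so \eqref{comp_circuits} as literally stated does not apply to it; you should either express the consistency requirement as the signed condition $\sum_j \pm\, \ell_{i_j}/\sqrt{a_{i_j}}=0$ around an arbitrary cycle and check that the hypothesis yields it for your fundamental cycles, or restrict, as the paper tacitly does, to circuits consisting of two directed paths issued from a vertex of $V_{\textrm{out}}$.
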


\begin{proof}
(1) The claim can be proved by induction along the lines of the discussion in Example~\ref{necess} -- we omit the details.\\
(2) In order to prove the converse implication, we first observe that using conditions \eqref{ccc-t}, \eqref{cck-t} and \eqref{Tau_cir} we can construct as in Theorem~\ref{thm_tree} a wave satisfying ${\rm(BBMG)}$. We just have to check that there is no continuity jump in $u$ when considering circuits. Let us consider a circuit $[\mv_i, \mv_i]^n_p$ for some $\mv_i \in V_{out}$ and let $\mv = \me_{i_k} \cap \me_{i_{k+1}}$ be a vertex belonging to this circuit. We want to know whether $\varphi_k$ is well defined when we leave $\mv$ following the paths $(\me_{i_{k+1}},\ \cdots, \ \me_{i_p})$ and $(\me_{i_1},\ \cdots, \ \me_{i_k})$. Up to a relabeling we can assume that $k+1\leq n$.  As in \eqref{contphi2}, from 
$$
\varphi_{i_j}(\ell_{i_j} -c_{i_j}t + \tau_{i_j}) =\varphi_{i_{j+1}}( -c_{i_{j+1}}t + \tau_{i_{j+1}}) \ \quad\textrm{ for } 2\leq j+1 \leq n 
$$
and
$$
\varphi_{i_h}(-c_{i_h}t + \tau_{i_h}) =\varphi_{i_{h+1}}(\ell_{i_{h+1}}-c_{i_{h+1}}t + \tau_{i_{h+1}})
\quad\textrm{ for } n +2 \leq h+1 \leq p \ , 
$$ 
we obtain
$$
\varphi_{i_n}(z) = \varphi_{i_k} \left(\sum_{j=k}^{n-1} \frac{c_{i_k}}{c_{i_j}}\ell_{i_j} + \frac{c_{i_k}}{c_{i_n}}z + \tau_{i_k} -\frac{c_{i_k}}{c_{i_j}} \tau_{i_{i_n}} \right) \ ,
$$
$$
\varphi_{i_p}(z) = \varphi_{i_{n+1}}\left( -\sum_{j=n+2}^p \frac{c_{i_{n+1}}}{c_{i_j}}\ell_{i_j} + \tau_{i_{n+1}} - \frac{c_{i_{n+1}}}{c_{i_p}}\tau_{i_p} + \frac{c_{i_{n+1}}}{c_{i_p}}z \right)
$$
as well as
$$
\varphi_{i_k}(z) = \varphi_{i_1}\left(\sum_{j=1}^{k-1} \frac{c_{i_1}}{c_{i_j}}\ell_{i_j} + \tau_{i_1} -\frac{c_{i_1}}{c_{i_k}} \tau_{i_{i_k}}  + \frac{c_{i_1}}{c_{i_k}}z\right).
$$
Thus, using in particular the compatibility conditions
$$
\varphi_{i_n}(\ell_n-c_{i_n}t + \tau_{i_n}) =\varphi_{i_{n+1}}( \ell_{i{n+1}}-c_{i_{n+1}}t + \tau_{i_{n+1}}),
$$
and
$$
\varphi_{i_p}(-c_{i_p}t + \tau_{i_p}) =\varphi_{i_1}( -c_{i_1}t + \tau_{i_1})  ,
$$
that arise from~\eqref{eq:cont-eq} we are led to
\begin{eqnarray*}
\varphi_{i_k}(z) & = & \varphi_{i_k} \left( \sum_{j=k }^n \frac{c_{i_k}}{c_{i_j}}  \ell_{i_j} - \sum_{j=n+1}^p \frac{c_{i_k}}{c_{i_j}}  \ell_{i_j} + \sum_{j=1}^{k-1} \frac{c_{i_k}}{c_{i_j}}  \ell_{i_j} +z \right)\\ 
&=& \varphi_{i_k} \left( z + c_{i_k} \left[ \sum_{j=1 }^n \frac{1}{c_{i_j}}  \ell_{i_j} - \sum_{j=n+1}^p \frac{1}{c_{i_j}}  \ell_{i_j} \right] \right)
\end{eqnarray*}
Thanks to \eqref{comp_circuits} and by definition of the propagation speeds \eqref{ps-ndc}, this equation is satisfied and the wave is well defined along each circuit. The special case $k=n$, i.e. $\mv = \me_{i_n} \cap \me_{i_{n+1}}$, is the condition to glue back the waves into one single wave when leaving the circuit.
\end{proof}


\begin{thebibliography}{00}

\bibitem{AdaCacFin11}
R.~Adami, C.~Cacciapuoti, D.~Finco, and D.~Noja.
\newblock Stationary states of {NLS} on star graphs.
\newblock arXiv:1104.3839, 2011.

\bibitem{AdlGurGur99}
V.~Adler, B.~G{\"u}rel, M.~G{\"u}rses, and I.~Habibullin.
\newblock Boundary conditions for integrable equations.
\newblock {\em Journal of Physics A: Mathematical and General}, 30:3505, 1999.

\bibitem{Am} H. Amann, \emph{Ordinary Differential Equations}, de Gruyter, Berlin, 1990.

\bibitem{BPN} J. von Below, \emph{Parabolic network equations}, T\"ubingen, $2^\textrm{nd}$edition, 1994.

\bibitem{Bel95}
J.~von Below.
\newblock Front propagation in diffusion problems on trees.
\newblock In C.~{Bandle et al.}, editor, {\em Calculus of variations,
  applications and computations (Proc.\ Mont-{\`a}-Mousson 1994)}, volume 326
  of {\em Pitman Res.\ Notes Math.\ Ser.}, pages 254--265, Longman, Harlow, 1995.
  .

\bibitem{BBM1} T.B. Benjamin, J.L. Bona and J.J. Mahony, Model equations for long waves in nonlinear dispersive systems, \emph{Phil. Trans. R. Soc. Lond.} \textbf{A272} (1972), 47-78.

\bibitem{BerLabHam10}
H.~Berestycki, M.~Labadie, and F.~Hamel.
\newblock Generalized traveling waves on complete Riemannian manifolds.
\newblock hal-00465620, 2010.


%\bibitem{Big74} N.L.~Biggs.
%\newblock {\em Algebraic graph theory.}
%{\em Cambridge Tracts Math. 67}, Cambridge University Press, Cambridge, 1967.

\bibitem{BC} J.L. Bona and R.C. Cascaval, Nonlinear dispersive waves on trees, \emph{Canadian Applied Mathematics Quarterly} \textbf{16} (2008), 1-17.

\bibitem{CamHol93}
R.~Camassa and D.D. Holm.
\newblock An integrable shallow water equation with peaked solitons.
\newblock {\em Physical Review Letters}, 71:1661--1664, 1993.

\bibitem{CarMug07}
S.\ Cardanobile and D.\ Mugnolo.
\newblock Analysis of a {F}itzhugh--{N}agumo--{R}all model of a neuronal
  network.
\newblock {\em Math.\ Meth.\ Appl.\ Sci.}, 30:2281--2308, 2007.

\bibitem{ChaLesZha10}
G.~Chartrand, L.~Lesniak, and P.~Zhang.
\newblock {\em Graphs and digraphs}.
\newblock CRC Press, Boca Raton, FL, 2010.

\bibitem{Chow} S.N. Chow and J.K. Hale, \emph{Method of Bifurcation Theory}, Springer-Verlag, New York, 1982.

\bibitem{dePVaz91}
A.~de~Pablo and J.L. Vazquez.
\newblock Travelling waves and finite propagation in a reaction-diffusion
  equation.
\newblock {\em J.\ Differ.\ Equ.}, 93:19--61, 1991.

%\bibitem{Dor05}
%B.~Dorn.
%\newblock {\em Semigroups for flows on infinite networks}.
%\newblock Master's thesis, Eberhard-Karls-Universit{\"a}t, T{\"u}bingen, 2005.

\bibitem{EhrKal09}
M.~Ehrnstr{\"o}m and H.~Kalisch.
\newblock Traveling waves for the Whitham equation.
\newblock {\em Differ.\ Int.\ Equations}, 22:1193--1210, 2009.

\bibitem{EKNN} P.G. Est\'evez, \c S. Kuru, J. Negro and L.M. Nieto, travelling wave solutions of the generalized Benjamin-Bona-Mahony equation, \emph{Chaos, Solitons and Fractals} \textbf{40} (2009), 2031-2040.


\bibitem{EvaKemMaj92}
J.D.\ Evans, G.C.\ Kember, and G.\ Major.
\newblock Techniques for obtaining analytical solutions to the multicylinder
  somatic shunt cable model for passive neurones.
\newblock {\em Biophys.\ J.}, 63:350--365, 1992.


\bibitem{Exn12}
P.~Exner.
\newblock Momentum operators on graphs.
\newblock arXiv:1205.5941, 2012.

\bibitem{Guck} J. Guckenheimer, P. Homes, \emph{Nonlinear Oscillations, Dynamical Systems and Bifurcations of Vector Fields}, Springer-Verlag, New York, 1999.


%\bibitem{KraSik05}
%M.~Kramar and E.~Sikolya.
%\newblock Spectral properties and asymptotic periodicity of flows in networks.
%\newblock {\em Math.\ Z.}, 249:139--162, 2005.


\bibitem{PokBor04}
Y.V.\ Pokornyi and A.V.\ Borovskikh.
\newblock Differential equations on networks (geometric graphs).
\newblock {\em J.\ Math.\ Sci.}, 119:691--718, 2004.

\bibitem{RueSch53}
K.\ Ruedenberg and C.\ W.\ Scherr.
\newblock {Free-Electron Network Model for Conjugated Systems.\ I.\ Theory}.
\newblock {\em J.\ Chem.\ Phys.}, 21:1565--1581, 1953.

\bibitem{SY} M. Song and C. Yang, Exact travelling wave solutions of the Zakharov-Kuznetsov-Benjamin-Bona-Mahony equation, \emph{Applied Mathematics and Computation} \textbf{216} (2010), 3234-3243.

\bibitem{Whit} G.B. Whitham, \emph{Linear and Nonlinear Waves}, Wiley, New York, 1974.


\bibitem{WW} E.T Whittaker and G. Watson, \emph{A course of modern analysis}, Cambridge University Press, Cambridge, 1988.
\end{thebibliography}
\end{document}